\documentclass{amsart}


\usepackage{amsthm, amssymb, amscd, amsmath, amsfonts, amsbsy}
\usepackage{graphicx}
\usepackage{hyperref}
\usepackage[a4paper, margin=1in]{geometry}
\usepackage{float}
\usepackage{xfrac}
\usepackage{tikz-cd}
\usepackage{mathtools}
\usepackage[percent]{overpic}	
\usepackage{caption, subcaption}


\newcommand{\R}{\ensuremath{\mathbb R}}
\newcommand{\N}{\ensuremath{\mathbb N}}

\newcommand{\unknot}{U}

\newcommand{\diagrams}{\mathcal{D}}

\newcommand{\links}{\mathcal{L}}
\newcommand{\rlinks}{\mathcal{\bar L}}

\newcommand{\olinks}{\mathcal{\hat L}}
\newcommand{\olinkss}[1]{\olinks_{#1}}

\newcommand{\basis}{\mathcal{B}}
\newcommand{\rbasis}{\mathcal{\bar B}}
\newcommand{\rbasiss}[1]{\rbasis_{#1}}
\newcommand{\obasis}{\mathcal{\hat B}}
\newcommand{\obasiss}[1]{\obasis_{#1}}

\newcommand{\hsm}{\mathcal{H}}
\newcommand{\rhsm}{\mathcal{\bar H}}
\newcommand{\rhsms}[1]{\rhsm_{#1}}
\newcommand{\ohsm}{\mathcal{\hat H}}
\newcommand{\ohsms}[1]{\ohsm_{#1}}

\newcommand{\submodule}{S}
\newcommand{\rsubmodule}{\bar S}

\newcommand\icon[1]{ \raisebox{-0.7em}{\includegraphics[page=#1]{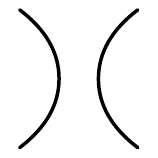}}}

\newcommand\sumpair[1]{ \raisebox{-1.1em}{\begin{overpic}[page=#1]{figs}\put(5,27){\footnotesize{$L_1$}}\put(73,27){\footnotesize{$L_2$}}\end{overpic}}}

\newcommand\imga[1]{ \raisebox{-2.1em}{\begin{overpic}[page=22]{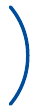}\put(20,55){\small{$#1$}}\end{overpic}}}
\newcommand\imgb[1]{\;\raisebox{-2.1em}{\begin{overpic}[page=24]{figures}\put(-8,46){\small{$#1$}}\end{overpic}}}
\newcommand\imgB[1]{\;\raisebox{-2.1em}{\begin{overpic}[page=53]{figures}\put(-8,46){\small{$#1$}}\end{overpic}}}
\newcommand\imgc[1]{ \raisebox{-2.1em}{\begin{overpic}[page=21]{figures}\put(4,44){\small{$#1$}}\end{overpic}}}
\newcommand\imgd[1]{ \raisebox{-2.1em}{\begin{overpic}[page=23]{figures}\put(4,44){\small{$#1$}}\end{overpic}}}

\newcommand\smgenth[1]{ \raisebox{-1.85em}{\begin{overpic}[page=14]{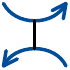}\put(15,61){\small{$#1$}}\end{overpic}}}
\newcommand\smgenthb[1]{ \raisebox{-1.85em}{\begin{overpic}[page=15]{icons}\put(-35,45){\small{$#1$}}\end{overpic}}}
\newcommand\smgenH[1]{ \raisebox{-1.85em}{\begin{overpic}[page=16]{icons}\put(39,44){\small{$#1$}}\end{overpic}}}
\newcommand\smgenHb[1]{ \raisebox{-1.85em}{\begin{overpic}[page=17]{icons}\put(40,44){\small{$#1$}}\end{overpic}}}

\newcommand\Q[1]{\raisebox{-2.3em}{\includegraphics[page=#1]{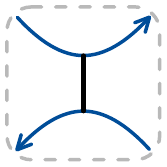}}}
\newcommand\iconr[1]{\raisebox{-0.7em}{\includegraphics[page=#1]{figures}}}

\newcommand\Qtext[4]{\raisebox{-2.3em}{
\begin{overpic}[page=#1]{proof}\put(#2,#3){\small{$#4$}}\end{overpic}}
}

\newcommand{\rei}[1]{\raisebox{0.65em}{\;\;$\xleftrightarrow{#1}$\;\;}}

\newcommand\icn[1]{ \raisebox{-0.7em}{\includegraphics[page=#1]{icons}}}

\newcommand\icm[1]{ \raisebox{-0.9em}{\includegraphics[page=#1]{icons}}}

\newcommand\icex[1]{ \raisebox{-0.9em}{\includegraphics[page=#1]{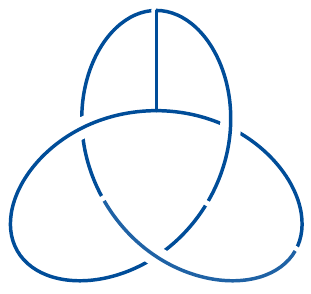}}}

\newcommand\smallicn[1]{ \raisebox{-0.55em}{\includegraphics[page=#1]{icons}}}

\newcommand\unknotfig{ \raisebox{-0.35em}{\includegraphics[page=12]{icons}}}
\newcommand\unlinkfig{ \raisebox{-0.35em}{\includegraphics[page=13]{icons}}}

\newcommand\rHSMknot[1]{\bigg[\!\!\raisebox{-0.25em}{#1}\bigg]_{\rbasis}}
\newcommand\HSMknot[1]{\bigg[\!\!\raisebox{-0.25em}{#1}\,\bigg]_{\basis}}
\newcommand\hpoly[1]{P\bigg(\!\!\!\raisebox{-0.3em}{#1}\bigg)}

\newcommand{\bTheta}{\bar\Theta}
\newcommand{\bH}{\bar{H}}

\newcommand{\Ti}{\Theta_i}
\newcommand{\bTi}{\bar\Theta_i}
\newcommand{\Hi}{H_i}
\newcommand{\bHi}{\bH_i}

\newcommand{\Tci}{\Theta_{c_i}}
\newcommand{\bTci}{\bar\Theta_{c_i}}
\newcommand{\Hci}{H_{c_i}}
\newcommand{\bHci}{\bH_{c_i}}

\newcommand{\smT}{\smallicn{19}}
\newcommand{\smH}{\smallicn{21}}

\newcommand{\smbT}{\smallicn{20}}
\newcommand{\smbH}{\smallicn{22}}

\newcommand{\Ttt}{\smT\smbT\smbT}
\newcommand{\Tth}{\smT\smbT\smbH}
\newcommand{\Thh}{\smT\smbH\smbH}
\newcommand{\ttH}{\smbT\smbT\smH}
\newcommand{\tHh}{\smbT\smH\smbH}
\newcommand{\Hhh}{\smH\smbH\smbH}

\newcommand{\Heq}{\stackrel{\eqref{eq:skein}}{=}}
\newcommand{\isoteq}{\stackrel{\text{isot.}}{=}}
\newcommand{\sumeq}{\stackrel{\text{\eqref{lemma:sum}}}{=}}
\newcommand{\eqeq}[1]{\stackrel{\text{\eqref{#1}}}{=}}

\newcommand{\tensor}{\otimes}

\theoremstyle{definition}

\newtheorem{example}{Example}[section]
\newtheorem{theorem}{Theorem}[section]
\newtheorem{lemma}{Lemma}[section]
\newtheorem{proposition}{Proposition}[section]
\newtheorem{remark}{Remark}[section]

\title{An invariant for colored bonded knots}
\author[B. Gabrov\v sek]{Bo\v{s}tjan Gabrov\v sek}
\address[Bo\v{s}tjan Gabrov\v sek]{University of Ljubljana, Faculty of Mechanical Engineering\\
        Aškerčeva 6, 1000 Ljubljana, Slovenia; University of Ljubljana, Faculty of Mathematics and Physics\\
        Jadranska 19, 1000 Ljubljana, Slovenia.}
        
\email[B.~Gabrov\v sek]{bostjan.gabrovsek@fs.uni-lj.si}
\date{\today}

\keywords{HOMFLYPT polynomial, skein modules, bonded knots, protein knots}

\subjclass{Primary 57M25; Secondary 05C15, 	92D99}

\begin{document}

\begin{abstract}
We equip a knot $K$ with a set of colored bonds, that is, colored intervals properly embedded into $\R^3 \setminus K$. Such a construction can be viewed as a structure that topologically models a closed protein chain including any type of bridges connecting the backbone residues. 
We introduce an invariant of such colored bonded knots that respects the HOMFLYPT relation, namely the HOMFLYPT skein module of colored bonded knots.
We show that the rigid version of the module is freely generated by colored $\Theta$-curves and handcuff links, while the non-rigid version is freely generated by the trivially embedded $\Theta$-curve. The latter module, however, does not provide information about the knottedness of the bonds. 


\end{abstract}

\maketitle

\section{Motivation}\label{sec:motivation}
In this paper we present a topological model, which models, as closely as possible, a closed protein backbone chain with bonds and compute a HOMFLYPT-type invariant to distinguish between such structures.

The existence of knotted proteins was first proposed in 1994~\cite{Mansfield1994} and discovered shortly after~ \cite{Liang1994}.
Today we know that knotted proteins are not uncommon as approximately 1\% of all entries in the protein data bank (PDB) are knotted~\cite{Virnau2006, Mishra2011}.
The function of protein knots is not yet completely understood, but it is hypothesised that knotedness increases thermal and kinetic stability of the molecule~\cite{Sulkowska2008}.

In order to study the topology of a protein backbone, we require that it forms a closed loop, which also has a natural orientation induced by the  amino and carboxyl termini of the protein.

The existence of an unambiguous closure method is still an open question, but common methods are of probabilistic nature (e.g.~\cite{Dabrowski-Tumanski2019, Jamroz2014, Sulkowska2012}), opposed to direct methods (e.g.~\cite{Virnau2006}). In the example in Figure~\ref{fig:trefoil} direct segment closure is used. 
In this paper we assume the proteins are closed by any of these methods, since closure methods are already thoroughly discussed in  \cite{Millett2013,Goundaroulis2017}. 


\begin{figure}[b]
\centering
\subcaptionbox{The YBEA methyltransferase from E. coli (PDB entry 1NS5).\label{fig:tref1}}
{\includegraphics[page=1]{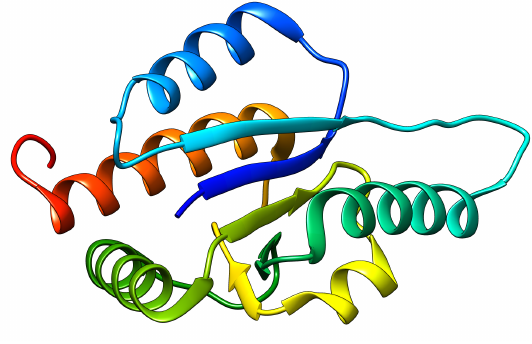}}
\qquad\quad
\subcaptionbox{The corresponding topology of the backbone (closed by a direct segment).\label{fig:tref2}}
{\includegraphics[page=2]{toxin}}
\caption{An example of a knotted protein and its backbone topology forming an oriented right-handed trefoil knot.}\label{fig:trefoil}
\end{figure}

There are recently proposed models that do not involve closing the backbone, for example, virtual knots~\cite{Alexander2017} and knottoids (open-knotted curves)~\cite{Turaev2012, Goundaroulis2017a, Goundaroulis2017, Gueguemcue2017, Gueguemcue2017a}.

The three-dimensional protein structure also consists of bonds tying parts of the peptide backbone. These bonds have both a structural and functional role and can be of several types: hydrogen bonds, hydrophobic interactions, salt bridges, disulphide brides, etc. From the perspective of knot theory it seems imperative that these bonds are encoded into the knotted topological structure itself.

Some of the existing literature that studies such bonds include \cite{Tian2017}, where the bond is replaced by a tangle, a similar approach is used in \cite{Goundaroulis2017}, where the underlying structures are (rigid-vertex) bonded planar knotoids.
Taking into consideration only one bond, the topology can be studied via $\Theta$-curves (graphs with two vertices and three parallel edges~\cite{Kauffman1993, Chlouveraki2019}): in \cite{Dabrowski-Tumanski2019} they identified 7 topologically inequivalent protein $\Theta$-curve structures and in \cite{ODonnol2018} $\Theta$-curve analysis was used to study knots appearing during DNA replication. 
In general, these models either transform the bonded structure to a classical knot or link by tangle replacement or view the structure as a spatial graph. 

The advantages of studying bonded knots, defined in the next section, are:
\begin{itemize}
\item bonded knots can contain any number of bonds,
\item the coloring function expresses different types of chemical bonds,
\item we differentiate between the bond and the backbone strand,
\item we take in to account the orientation of the protein.
\end{itemize}


This paper is organized as follows.
In Section~\ref{sec:bonded} we define non-rigid and rigid versions of colored bonded knots,
in Section~\ref{sec:hsm} we define the HOMFLYPT skein module of colored bonded knots, in Section~\ref{sec:rigid} we compute the HOMFLYPT skein module of rigid colored bonded knots, in Section~\ref{sec:non-rigid} we compute the HOMFLYPT skein module of non-rigid colored bonded knots, and in Section~\ref{sec:refined} we extend the coloring function of the rigid case and compute examples.

\section{Bonded knots}\label{sec:bonded}

A \emph{colored bonded knot} (or link) is a triple $(L, \mathbf{b}, \chi)$, where:
\begin{itemize}
\item $L$ is an oriented knot (or link) embedded in the three-sphere $S^3$ (or $\R^3$),
\item  $\mathbf{b} = \{b_1, b_2,\ldots,b_n\}$ is a set of pairwise disjoint \emph{bonds} (closed intervals) properly embedded into $S^3 \setminus L$,
\item $\chi:\mathbf{b} \rightarrow \N$ is a \emph{coloring function} on the bonds.
\end{itemize}

A \emph{bonded link diagram} is a regular projection of $L$ and the $b$'s to a plane with information of under/overcrossings and the coloring. We call a projection regular if it is cuspless and the only multiple points are finitely many transverse double points not coinciding with vertices (intersections between the link and the bonds), see Figure~\ref{fig:forb} for the list of forbidden positions.

\begin{figure}
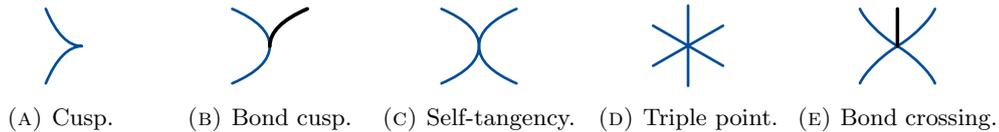

\centering
\subcaptionbox{Cusp.\label{fig:f1}}[7.5em]{\iconr{32}}
\subcaptionbox{Bond cusp.\label{fig:f2}}[7.5em]{\iconr{35}}
\subcaptionbox{Self-tangency.\label{fig:f3}}[7.5em]{\iconr{33}}
\subcaptionbox{Triple point.\label{fig:f4}}[7.5em]{\iconr{34}}
\subcaptionbox{Bond crossing.\label{fig:f5}}[7.5em]{\iconr{36}}
\caption{Forbidden diagram positions.}\label{fig:forb}
\end{figure}

\begin{figure}
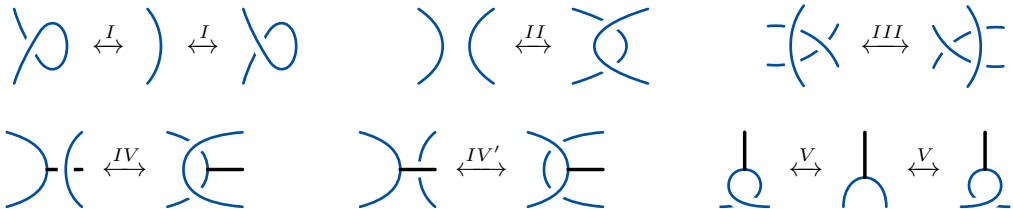

$$\iconr{49}\rei{I}\iconr{1}\rei{I}\iconr{2} \hspace{4em}
  \iconr{3}\rei{II}\iconr{4} \hspace{4em}
  \iconr{5}\rei{III}\iconr{6} $$

$$\iconr{7}\rei{IV}\iconr{8} \hspace{4em}
\iconr{9}\rei{IV'}\iconr{54} \hspace{4em}
\iconr{55} \rei{V}\iconr{56} \rei{V}\iconr{57} $$ 
  \caption{Reidemesiter moves for bonded links. Although not indicated in the figures, arcs in moves I, II, III and the free arcs in IV and IV' can be either link arcs or (colored) bonds.} \label{fig:reidemeister}
\end{figure}

The following theorem is a special case of the Reidemeister theorem for spatial graphs:

\begin{theorem}[special case of Theorem 2.1 in \cite{Kauffman1989}]
Two bonded links are ambient isotopic if and only if their diagrams are related by a finite sequence of (Reidemeister) moves I -- V.
\end{theorem}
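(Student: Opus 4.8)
The plan is to realize a colored bonded link as a decorated spatial graph, invoke the Reidemeister theorem for spatial graphs, and then check that the generalized moves of that theorem specialize to exactly moves I--V.

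First I would associate to a triple $(L,\mathbf{b},\chi)$ the spatial graph $G$ whose vertices are the $2n$ endpoints of the bonds $b_1,\dots,b_n$. Since the bonds are pairwise disjoint closed intervals properly embedded in $S^3\setminus L$, each such endpoint lies on $L$ and belongs to a single bond, so every vertex of $G$ is trivalent, incident to the two backbone arcs of $L$ meeting at that point and to one bond. The orientation of $L$ and the coloring $\chi$ are carried along as decorations on the edges of $G$; because none of the moves below creates, deletes, or subdivides a vertex, these decorations are simply transported, and they impose no further conditions. Conversely, every spatial graph of this restricted shape --- a disjoint union of circles together with trivalent vertices, each of the fixed incidence type ``two backbone arcs and one bond'' --- comes from a unique colored bonded link, and a bonded link diagram in the sense of Section~\ref{sec:bonded} is precisely a generic spatial-graph diagram of $G$: the configurations forbidden in Figure~\ref{fig:forb} (cusps, self-tangencies, triple points, crossings at a vertex) are exactly the non-generic ones excluded in the diagrammatic calculus for spatial graphs. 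Hence ambient isotopy of colored bonded links is literally ambient isotopy of spatial graphs, restricted to this class.

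Next I would apply Theorem~2.1 of \cite{Kauffman1989}: two generic diagrams represent ambient isotopic spatial graphs if and only if they differ by a finite sequence of the three classical Reidemeister moves together with the vertex moves --- sliding an edge over or under a vertex, and, for topological (non-rigid) vertices, re-ordering the edge-ends around a vertex. Restricting to our class, every vertex is trivalent of a single incidence type, so each vertex move has only finitely many local pictures; collecting them gives moves IV and IV', the over- and under-slide of an arc past a vertex (the sliding arc being a backbone arc or a bond, as recorded in the caption of Figure~\ref{fig:reidemeister}), and the instances of move V, which reposition the bond-end at the vertex --- the topological-vertex move of Kauffman's theorem, specialized to a trivalent vertex with an oriented backbone. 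The classical moves I--III carry over verbatim, the only novelty being that the arcs involved may now be bonds, which changes nothing. The reverse implication, that each of moves I--V is realized by an ambient isotopy of $G$, is immediate from the local pictures, and the orientation of $L$ is manifestly preserved, since every move either fixes the backbone arcs or is an oriented classical move.

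The step I expect to be the real obstacle is the bookkeeping of conventions: one must verify that Kauffman's vertex-reordering move, applied at a trivalent vertex whose two backbone germs are distinguished by the orientation of $L$, is generated by precisely the instances of move V drawn in Figure~\ref{fig:reidemeister}, and that Kauffman's genericity hypotheses coincide with the forbidden list of Figure~\ref{fig:forb}. Once this is settled, nothing further is needed: any more elaborate reconfiguration --- pushing a bond past another bond, dragging the interior of a bond past a vertex, and the like --- decomposes into moves I--V by a standard general-position argument, exactly as in the unbonded spatial-graph case.
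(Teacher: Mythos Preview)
Your proposal is correct and aligns with the paper's approach: the paper does not give a proof but simply cites the result as a special case of Theorem~2.1 in \cite{Kauffman1989}, and your argument spells out exactly why the specialization works. There is nothing to add.
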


In other words, ambient isotopy of bonded links is generated by moves I -- V.

We denote by $\links$ the set of all colored bonded links and by $\diagrams$ the set of all colored bonded link diagrams. 

Before attacking the case of bonded knots, we will construct a somewhat combinatorically simpler object, namely \emph{rigid colored bonded knots}, which arise if we replace the move V with the move RV in Figure~\ref{fig:rigid}.
More precisely, rigid colored bonded knots are equivalence classes $\rlinks = \diagrams/{\kern -0.02em\sim}$, where two diagrams $D_1, D_2 \in \diagrams$ are equivalent iff they are connected through planar isotopy and a finite sequence of moves I -- IV and RV.

\begin{figure}
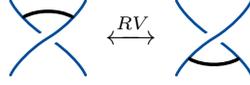

$$\iconr{43}\rei{RV}\iconr{44}$$ 
  \caption{A rigid-vertex version of move V.} \label{fig:rigid}
\end{figure}

As observed by Kauffman in~\cite{Kauffman1989}, rigid-vertex graphs are indeed easier to study, ``The problem of constructing good invariants in the general case is quite difficult,
due to the generation of arbitrary braiding at a vertex via the extended
move V.'' Non-rigidity, however, better reflect spatial isotopy and should be a model for bonded proteins, since the peptide chain has in general some degree of rotational freedom (see \cite{Tropp2012}).


\section{The HOMFLYPT skein module}\label{sec:hsm}

The HOMFLYPT polynomial~\cite{Freyd1985} can be regarded as one of the strongest polynomial knot  invariants as both the Alexander polynomial and Jones polynomial can be obtained from it by appropriate substitutions. In the case where the ambient space is not $S^3$, but any other connected and orientable 3-manifold, the polynomial may not be well-defined. Przytycki~\cite{Przytycki1991, Przytycki2006} and Turaev~\cite{Turaev1990} independently overcame this problem by introducing the concept of skein modules. In short, we obtain a skein module if we take the space of all finite linear combinations of links and on this space impose the skein relation through which the classical HOMFLYPT polynomial is defined.
The HOMFLYPT skein module, in particular, is hard to compute as it has been computed for only a handful of spaces~\cite{Turaev1990,Przytycki1992, Mroczkowski2004,Gabrovsek2014} (see also~\cite{Diamantis2016,Diamantis2016a,Diamantis2019,Mroczkowski2018} for algebraic aproaches and similar modules). The concept of a skein module has also been applied to other knotted structures, such as singular knots~\cite{Paris2013} (immersed circles in $\R^3$ with finitely many singular points). In the following paragraph we extend the notion of a skein module to colored bonded links.



Let $R$ be a commutative ring with units $l$ and $m$ an let $l^2\pm ml+1$ and $l^2+1$ also be invertible in $R$.
We denote by $R[\links]$ the free $R$-module spanned by colored bonded links $\links$ and by $\submodule(R,l,m)$ the submodule generated by the following HOMFLYPT skein expression

\begin{equation}\label{eq:skein}
l\icon{5} +l^{-1} \icon{6} +m \icon{7}.\tag{H}
\end{equation}
The HOMFLYPT expression consists of a linear combination of three links in $\links$ that are everywhere the same, except in a small disk in $\R^2$, where they look like the depicted figures.
The \emph{HOMFLYPT skein module of bonded links} is the quotient module 
$$\hsm(R,l,m) = R[\links] / \submodule(R,l,m).$$

Similarly, if we take $\rsubmodule(R,l,m)$ to be the submodule of  $R[\rlinks]$ generated by the HOMFLYPT skein expression~\eqref{eq:skein}, 
the \emph{HOMFLYPT skein module of rigid colored bonded links} is the module 
$$\rhsm(R,l,m) = R[\rlinks] / \rsubmodule(R,l,m).$$

We define the following  \emph{elementary colored bonded knots} with the bond color $i$:
\begin{equation}\label{eq:elementary}
\Theta_i = \imga{i}, \qquad
\bTheta_i = \imgb{i}, \qquad
\bTheta_i^* = \imgB{i}, \qquad
H_i = \imgc{i}, \qquad
\bH_i = \imgd{i}.
\end{equation}

i.e. three oriented $\Theta$-curves and two oriented handcuff links (as non-rigid links $\Theta_i=\bTheta_i=\bTheta_i^*$ and $H_i=\bH_i$). Throughout the paper we denote by $U$ the unknot.


Let $\rbasis$ be the set of all finite unordered products of $\Theta_i$, $\bTheta_i$, $H_i$, and $\bH_i$:
$$\rbasis = \Big\{\prod_{i=1}^k \Theta_i^{m_{i}} \bTheta_i^{m'_{i}}   H_i^{n_{i}} \bH_i^{n'_{i}}  \mid k\in\mathbb N; \; \vec m, \vec m', \vec n, \vec n' \in \mathbb N_0^k\setminus \vec 0 \Big\} \cup \{ \unknot \}$$

and let $\basis$ be the set of all finite products of $\Theta_i$'s:
$$\basis = \Big\{\prod_{i=1}^k \Theta_i^{n_{i}}  \mid k\in\mathbb N; \; \vec n \in \mathbb N_0^k\setminus \vec 0 \Big\} \cup \{\unknot\}.$$

The above products can be regarded as unordered, since the multiplication operation, the \emph{disjoint sum}, is commutative.


We are ready to state our two main theorems, which we will prove in Sections~\ref{sec:rigid} and~\ref{sec:non-rigid}, respectively.

\begin{theorem}\label{thm:main2}
The HOMFLYPT skein module of  colored rigid bonded links, $\rhsm(R,l,m)$, is a free module generated by $\rbasis$.
\end{theorem}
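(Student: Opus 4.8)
The plan is to establish the theorem in two stages: first show that $\rbasis$ \emph{spans} $\rhsm(R,l,m)$ over $R$, and then show that the spanning set is \emph{linearly independent}, hence a free basis.

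\emph{Spanning.} Given any rigid colored bonded link diagram $D$, I would first use the skein relation~\eqref{eq:skein} to reduce the underlying link. The idea is the classical one from the HOMFLYPT story: using the skein relation at each crossing that is not ``necessary'', one expresses $[D]$ as an $R$-linear combination of diagrams in which every connected component, ignoring bonds, is an unknotted, unlinked circle; the resulting coefficients lie in $R$ because $l^2 \pm ml + 1$ and $l^2+1$ are invertible (these are exactly the denominators that appear when resolving the unknot and the Hopf link). Along the way one must be careful that the skein moves are compatible with the rigid-vertex structure — but~\eqref{eq:skein} only ever touches a small disk away from the vertices, so crossings involving bond arcs are resolved in the same way as crossings involving only link arcs. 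After this reduction one is left with an $R$-combination of diagrams whose ``backbone'' is a trivial link with some colored bonds attached. Each such diagram can be cut along the bonds into a disjoint sum of pieces, each piece containing exactly one bond; using the rigid Reidemeister moves I--IV and RV, plus isotopy, one checks that each single-bond piece of the appropriate local type is equivalent to one of the elementary knots $\Theta_i$, $\bTheta_i$, $H_i$, or $\bH_i$ with $i$ the bond's color. (The move RV is what prevents arbitrary braiding from being introduced at a vertex, which is precisely why the rigid case produces finitely many local models rather than infinitely many; this is the point Kauffman's remark is alluding to.) Hence $[D]$ lies in the $R$-span of $\rbasis$.

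\emph{Linear independence.} This is the step I expect to be the main obstacle, since spanning arguments in skein theory are fairly mechanical but independence requires producing enough invariants to separate the generators. The strategy is to construct an $R$-module homomorphism $\Phi \colon \rhsm(R,l,m) \to M$ into some explicitly understood module $M$ that sends the elements of $\rbasis$ to a manifestly linearly independent family. The natural candidate for $M$ is a polynomial-type module built from the HOMFLYPT polynomials of the classical links obtained by forgetting the colors and resolving each bond in a fixed way — for instance, replacing each colored bond by a formal variable $t_i$ times a chosen tangle and recording the HOMFLYPT polynomial of the resulting link. Because the disjoint sum corresponds to multiplication and the HOMFLYPT polynomial of a split link factors (up to the known $\delta = -(l+l^{-1})/m$ factor), one gets a ring homomorphism on $R[\rlinks]$ that kills $\rsubmodule(R,l,m)$ by construction, and on $\rbasis$ it produces distinct monomials in the variables $t_i$ (and in the HOMFLYPT variable) indexed by the exponent vectors $\vec m, \vec m', \vec n, \vec n'$. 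One must check that the four elementary knots of a given color $i$ really do get sent to four $R$-linearly independent values — this is where a second, more refined invariant may be needed, e.g.\ one that distinguishes a $\Theta$-curve from a handcuff (the number of components after bond-smoothing, or a linking-number type quantity), and one that distinguishes $\Theta_i$ from $\bTheta_i$ (this is the subtler pair, likely separated by a relative orientation/writhe bookkeeping at the vertex that survives the rigid moves but not move V). Assembling these invariants into a single $\Phi$ and verifying it is well-defined and separates $\rbasis$ completes the proof; the bulk of the work, and the only genuinely delicate part, is confirming that the chosen invariants are strong enough to keep the four per-color generators independent while still descending to the skein quotient.
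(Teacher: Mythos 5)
Your outline has the right two-step shape (spanning, then independence), but both halves are missing the arguments that actually carry the paper's proof. In the spanning step, the claim that after skein-reducing the backbone ``each single-bond piece \ldots is equivalent to one of the elementary knots $\Theta_i,\bTheta_i,H_i,\bH_i$'' by moves I--IV and RV alone does not hold, and no amount of rigid-move isotopy will make it hold: the two backbone arcs adjacent to a bond remain attached to the rest of the diagram, and detaching them is a skein-theoretic, not diagrammatic, operation. (Also note that the relation \eqref{eq:skein} only applies to crossings between oriented link strands; crossings of a strand with a bond are removed by sliding the strand off the bond with move IV, a step your reduction never addresses.) The paper's mechanism is a computation that returns to the original diagram with coefficient $-l^2$ (resp.\ $-l^{-2}$), yielding a self-referential linear system (Equations \eqref{eq:anti1}--\eqref{eq:antisol2} in the antiparallel case, \eqref{eq:par1}--\eqref{eq:parsol1} in the parallel case) whose solution, Equations \eqref{eq:antieq1} and \eqref{eq:pareq1}, expresses the bonded link as connected sums with the elementary generators; these are then converted to disjoint products via Lemma~\ref{lemma:sum}. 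The determinant of that system, $(l^2+1)^2-l^2m^2=(l^2+1-lm)(l^2+1+lm)$, is exactly where the invertibility hypotheses on $l^2\pm lm+1$ and $l^2+1$ enter --- not, as you suggest, in ``resolving the unknot and the Hopf link.'' Without these cut-out identities your reduction has no reason to land in the span of $\rbasis$.

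The independence half is the part you defer, and it is a genuine hole rather than a delicate verification left to the reader. Your candidate map (record the HOMFLYPT polynomial of one fixed resolution of each bond, weighted by a color variable) provides a single linear functional per bond and cannot separate the four same-colored generators; indeed, in the paper's Table~\ref{tab:vals} the oriented-smoothing map $g^*_{1,0}$ sends $\Theta_{c_d}$ and $\bTheta_{c_d}$ to the same value, and likewise $H_{c_d}$ and $\bH_{c_d}$. What is actually needed --- and what the paper constructs --- are \emph{four} bond-resolution maps $g_{d,0},g_{d,\infty},g_{d,+},g_{d,-}$, each proved well defined on the rigid equivalence classes (the invariance under RV, checked by an explicit commuting square, is the nontrivial point, and it is precisely the kind of check your ``refined invariant'' would also have to survive), whose values on $\Theta_{c_d},\bTheta_{c_d},H_{c_d},\bH_{c_d}$ give a $4\times4$ system with determinant $l^{-5}m^{-3}(l^2+lm+1)^2(l^2-lm+1)^2$, invertible in $R$. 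The paper also runs the whole induction in an auxiliary ordered, bond-graded module $\ohsm(R,l,m)$ and recovers $\rhsm(R,l,m)$ as the quotient by the symmetric-group action permuting bonds; this is what makes ``the last bond'' well defined in the induction. As written, your proposal supplies neither the bond-excision formulas needed for spanning nor the separating maps needed for freeness.
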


\begin{theorem}\label{thm:main}
The HOMFLYPT skein module of colored bonded links, $\hsm(R,l,m)$, is a free module generated by $\basis$.
\end{theorem}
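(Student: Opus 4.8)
The plan is to bootstrap from Theorem~\ref{thm:main2}. Passing from rigid to non-rigid bonded links only coarsens the equivalence on diagrams (moves I--IV are shared and move RV is realised by move V), so the identity on $\diagrams$ descends to a surjection of $R$-modules $q\colon\rhsm(R,l,m)\twoheadrightarrow\hsm(R,l,m)$ carrying $\rsubmodule$ into $\submodule$. By Theorem~\ref{thm:main2}, $\rbasis$ generates $\rhsm(R,l,m)$, hence $q(\rbasis)$ generates $\hsm(R,l,m)$; and since $\bTi=\Ti$ and $\bHi=\Hi$ as non-rigid bonded knots (move V washes out the local rigid data at a vertex), $q$ carries the typical generator $\prod_i\Ti^{m_i}\bTi^{m'_i}\Hi^{n_i}\bHi^{n'_i}$ of $\rbasis$ to the disjoint product $\prod_i\Ti^{m_i+m'_i}\Hi^{n_i+n'_i}$. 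Thus $\hsm(R,l,m)$ is already spanned by $\{\unknot\}$ together with all disjoint products of the $\Ti$'s and $\Hi$'s, and it remains only to eliminate the handcuffs. The heart of the matter is the claim that for each colour $i$ there is a unit $\lambda_i\in R$ with $\Hi=\lambda_i\,\Ti$ in $\hsm(R,l,m)$; granting it, $\prod_i\Ti^{a_i}\Hi^{b_i}=\bigl(\prod_i\lambda_i^{b_i}\bigr)\prod_i\Ti^{a_i+b_i}\in R\cdot\basis$, so $\basis$ spans $\hsm(R,l,m)$.

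\emph{Freeness.} For this I would use a grading. The multiset of colours of the bonds of a bonded link is unchanged by moves I--V and --- crucially --- by the skein expression~\eqref{eq:skein}, all three of whose terms carry the same bonds; hence $R[\links]$ and $\submodule(R,l,m)$ are graded by this bond multidegree $\vec d$, and $\hsm(R,l,m)=\bigoplus_{\vec d}\hsm(R,l,m)_{\vec d}$. The spanning set $\basis$ has exactly one element of each multidegree --- $\unknot$ in degree $\vec 0$, and $\prod_i\Ti^{d_i}$ in degree $\vec d\neq\vec 0$ --- so each graded piece is a cyclic $R$-module. To see its generator is not torsion, forget all bonds: this turns a bonded skein triple into an ordinary HOMFLYPT skein triple in $S^3$ and makes moves IV, IV', V vacuous, hence induces an $R$-linear map $\varepsilon\colon\hsm(R,l,m)\to\hsm_{S^3}(R,l,m)$ into the classical HOMFLYPT skein module of $S^3$, which is free of rank one on $\unknot$; identify it with $R$ via $\unknot\mapsto1$. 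Forgetting the bonds of $\prod_i\Ti^{d_i}$ leaves an $n$-component unlink ($n=\sum_i d_i\geq1$), whose class is $(-\w/m)^{\,n-1}\unknot$, while $\varepsilon(\unknot)=1$; so $\varepsilon$ sends the generator of each $\hsm(R,l,m)_{\vec d}$ to a unit of $R$. Therefore the composite $R\to\hsm(R,l,m)_{\vec d}\xrightarrow{\varepsilon}R$ is multiplication by a unit, forcing $\hsm(R,l,m)_{\vec d}\cong R$; summing over $\vec d$ shows $\hsm(R,l,m)$ is free on $\basis$. (Incidentally this pins down $\lambda_i=-\w/m$, the class of a $2$-component unlink.)

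\emph{The main obstacle} will be the handcuff-to-theta relation $\Hi=\lambda_i\Ti$ --- precisely what move V adds beyond Theorem~\ref{thm:main2}, and it must invoke the skein relation because a handcuff graph and a $\Theta$-graph are not ambient isotopic. I would argue it as follows: starting from the trivial handcuff, use move V at one vertex --- impossible with a rigid vertex --- to clasp the bond with that vertex's loop $C_1$, slide $C_1$ (now caught on the bond) along the bond to the other vertex, and apply move V there to transfer the clasp onto the second loop $C_2$. The two loops now cross; resolving one such crossing by~\eqref{eq:skein}, the two crossing-change terms are handcuffs of strictly smaller complexity (say, with fewer $C_1$--$C_2$ crossings), whereas the oriented-smoothing term fuses $C_1$ and $C_2$ into a single circle through which the bond runs as a chord --- a $\Theta$-curve, which by the $\Theta$-reduction already carried out inside the proof of Theorem~\ref{thm:main2} is a scalar multiple of $\Ti$. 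An induction on the complexity then delivers $\Hi=\lambda_i\Ti$. Making the intermediate diagrams and the bookkeeping of which crossing to resolve fully explicit is the real technical burden; everything else above is formal.
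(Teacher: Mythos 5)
Your overall architecture is sound, and the freeness half is both correct and genuinely different from the paper: the bond-colour multidegree does grade $R[\links]$ and $\submodule(R,l,m)$ (all three terms of \eqref{eq:skein} carry the same bonds), and forgetting the bonds is compatible with moves I--V and with \eqref{eq:skein}, so it induces an $R$-linear map $\varepsilon\colon\hsm(R,l,m)\to R$ sending $\prod_i\Theta_i^{d_i}$ to the unit $(-\w/m)^{\,n-1}$; gradedness plus this evaluation gives linear independence of $\basis$ by a leaner route than the paper, which instead establishes freeness by verifying that its bond-elimination formulas \eqref{eq:antieq1}, \eqref{eq:pareq1}, \eqref{eq:pareq2} are invariant under move V. The surjection $\rhsm(R,l,m)\twoheadrightarrow\hsm(R,l,m)$ and the identifications $\bTi=\Ti$, $\bHi=\Hi$ are also fine.

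The genuine gap is the relation $\Hi=\lambda_i\Ti$, the only new input beyond Theorem~\ref{thm:main2}, which you leave as a sketch whose central step fails. Move V is supported in a small ball around the vertex and only braids the three edge-ends inside that ball; it cannot carry a ring of $C_1$ that encircles the bond across the far vertex onto $C_2$. If it could, the two loops of the handcuff would become genuinely clasped, i.e.\ the underlying two-component link (an ambient-isotopy invariant of a bonded link, since any isotopy restricts to the link) would change from the unlink to the Hopf link. If instead the crossings you create are only diagrammatic, then the crossing-switched term produced by \eqref{eq:skein} is a handcuff whose loops are Hopf-linked, which is not of ``strictly smaller complexity'' in your sense --- switching a crossing does not reduce the number of $C_1$--$C_2$ crossings --- so the proposed induction has no evident termination. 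A sanity check shows the danger: were the clasped diagram isotopic to $\Hi$ with resolution terms the trivial handcuff and $\Ti$, the skein relation would force $(l+l^{-1})\Hi=-m\,\Ti$, the reciprocal of the coefficient $-\w/m$ that your own $\varepsilon$-argument pins down, and the two relations together would create torsion, contradicting the freeness you just proved. The relation is in fact a one-liner, and this is how the paper gets \eqref{eq:hsmgens}: the one-crossing curve $\bTheta_i^*$ is non-rigidly isotopic to $\Theta_i$ (untwist the vertex by move V), and \eqref{eq:skein} applied at its single crossing gives $\bTheta_i^*=-l^2\bTheta_i-lm\bHi$; since move V also gives $\bTheta_i=\Theta_i$ and $\bHi=\Hi$, this yields $\Hi=-\tfrac{1+l^2}{lm}\,\Ti=-\tfrac{\w}{m}\,\Ti$, after which your spanning and freeness arguments go through.
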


\begin{remark}\label{remark}
If we define $\pi = \bigcup_{i\in \N} \{ \Ti, \bTi, \Hi, \bHi \}$ (resp. $\pi = \Theta_1 \cup \Theta_2 \cup \cdots$ ), then
$\rhsm(R,l,m)$ (resp. $\hsm(R,l,m)$) is isomorphic to $\mathbf{S} R \pi$, the symmetric tensor algebra over $R \pi$.
To recall, the tensor algebra $\mathbf{T} R \pi$ is the graded sum $\bigoplus_i T^i R \pi$, where
$T^0 R \pi = R$, $T^1 R \pi = R \pi$, $T^{i+1} R \pi = T^{i}R \pi \tensor R \pi$ and the symmetric tensor algebra is the quotient $\mathbf{S} R \pi = \mathbf{T} R \pi / (a \tensor b - b \tensor a)$ (see~\cite{Przytycki2006}).
\end{remark}

The connected sum operation of bonded knots is not unique in general, it is, however, unique in the modules $\hsm(R,l,m)$ and $\rhsm(R,l,m)$ as the following lemma states.

\begin{lemma}In $\hsm(R,l,m)$ and $\rhsm(R,l,m)$ the \emph{connected sum} of two links, $L_1$ and $L_2$ can be expressed by a disjoint sum $L_1L_2$ in terms of the following formula:

\begin{equation*}
\sumpair{16} \;=\, -\frac{m}{l+l^{-1}} \,\sumpair{17}\,.
\end{equation*}\label{lemma:sum}
\end{lemma}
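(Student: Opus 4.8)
The plan is to deduce the identity from a single application of the HOMFLYPT relation \eqref{eq:skein}, performed at one carefully placed crossing in the ``neck'' that joins $L_1$ to $L_2$. First I would isotope the connected sum $\sumpair{16}$ so that, inside a ball disjoint from all the bonds, from $L_1$, and from $L_2$, the two links are joined by a thin band whose core is a pair of \emph{antiparallel} strands; they must be antiparallel because the orientations of $L_1$ and $L_2$ have to agree along the merged component. Now spin a ball neighbourhood of $L_1$ by $180^\circ$ about the axis of this band. Since $SO(3)$ is connected this is an honest ambient isotopy of the bonded link, and its only visible effect is to introduce exactly one crossing between the two strands of the neck; call the resulting diagram $D$. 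Thus $\sumpair{16}=D$ in $\hsm(R,l,m)$, and the identical picture lives in $\rhsm(R,l,m)$ as well, using only moves I--IV (the rigid move RV, like move V, never enters, since everything happens away from the bonds).

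Next I would apply \eqref{eq:skein} at this unique neck crossing of $D$. The positive-crossing term is $D$ itself; the negative-crossing term is $D$ with that crossing switched, which is again ambient isotopic to $\sumpair{16}$ (spin the neighbourhood of $L_1$ by $180^\circ$ the other way); and the oriented-smoothing term replaces the two antiparallel strands of the neck by the two coherent turnbacks, capping $L_1$ off from $L_2$ and producing precisely the disjoint sum $\sumpair{17}=L_1L_2$. Substituting these three identifications into \eqref{eq:skein} yields, in the skein module,
\[
l\,\sumpair{16}\;+\;l^{-1}\,\sumpair{16}\;+\;m\,\sumpair{17}\;=\;0.
\]
Because $\w=l^{-1}(l^{2}+1)$ is invertible in $R$ by hypothesis (this is exactly where the assumed invertibility of $l^{2}+1$ is used), I can divide by it and solve for $\sumpair{16}$, obtaining the asserted formula; the very same computation proves it verbatim in $\rhsm(R,l,m)$.

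The step I expect to need the most care — and which I would draw out explicitly with a picture — is the claim that switching the single neck crossing does not change the bonded-link type, i.e. that both crossing resolutions of $D$ represent the connected sum. The clean justification is the $180^\circ$-spin argument above: a half-twist between the two neck strands is created, removed, or reversed by rotating a ball containing $L_1$ about the neck axis, and this is genuinely an ambient isotopy; everything else is routine bookkeeping with moves I--IV and \eqref{eq:skein}. I would also note that the argument uses nothing about the band beyond its existence, nor (in the link case) about which components of $L_1$ and $L_2$ get merged, so it simultaneously shows that the connected sum of bonded links, though not itself well defined, has a well-defined image in $\hsm(R,l,m)$ and in $\rhsm(R,l,m)$ — which is the content of the sentence preceding the lemma.
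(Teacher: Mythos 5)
Your proof is correct and is essentially the paper's own argument: the paper likewise isotopes the connected sum to create a single crossing in the neck, applies the relation \eqref{eq:skein} there (identifying both crossing resolutions with the connected sum and the oriented smoothing with the disjoint sum $L_1L_2$), and solves using the invertibility of $l+l^{-1}$. Your $180^\circ$-spin justification simply makes explicit the isotopy steps the paper records as ``isot.''\ in its one-line diagrammatic computation.
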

\begin{proof} The formula follows from the following computation:
$$\sumpair{16} \isoteq \sumpair{18}  \Heq -\frac{1}{l^2} \sumpair{19} - \frac{m}{l} \sumpair{20} \isoteq -\frac{1}{l^2} \sumpair{16} -\frac{m}{l} \sumpair{17}.$$

\end{proof}

For a given bond in a diagram, we say that the bond is either \emph{parallel} or \emph{antiparallel}, depending on the relative orientation of the adjacent arcs as depicted in Figure~\ref{fig:parallel}. Note that for rigid bonded knots, this bond property is unchanged under the Reidemeister moves.

\begin{figure}
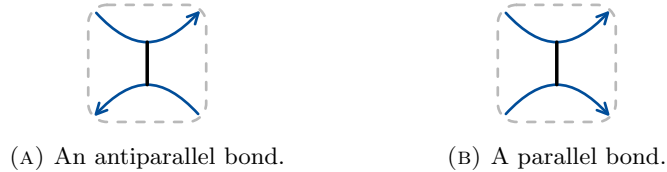

\centering
\subcaptionbox{An antiparallel bond.\label{fig:or1}}[15em]
{\Q{1}}
\subcaptionbox{A parallel bond.\label{fig:or2}}[15em]
{\Q{2}}
\caption{The two possible orientations of adjacent arcs define the bond to be either parallel or antiparallel.}\label{fig:parallel}
\end{figure}



\section{The rigid case} \label{sec:rigid}



Let us note that the statement of Theorem~\ref{thm:main2} collapses to that of singular links in~\cite{Paris2013} if we consider only uncolored parallel bonds and contract them to a point. 
Before proving Theorem~\ref{thm:main2}, we construct a somewhat bigger and simpler structure, which will enable us to canonically choose a bond from a colored bonded link.

Let $\olinks$ the set of links, where the bonds form a tuple $\mathbf{\hat b} = (b_1, b_2, \ldots, b_d)$ and the coloring function is $\hat\chi: (b_1, b_2, \ldots, b_d) \mapsto (c_1,c_2,\ldots,c_d) = c \in \N^d$. For simplicity, we say that the image $c$ is the \emph{coloring} of $L$.
Let $L$ and $L'$ be two ordered bonded links with bonds $\mathbf{\hat b} = (b_1,b_2,\ldots,b_d)$ and $\mathbf{\hat b'} = (b'_1,b'_2,\ldots,b'_{d'})$ colored by $c = (c_1,c_2,\ldots,c_d)$ and $c' = (c'_1,c'_2,\ldots,c'_{d'})$, respectively. The (ordered) disjoint sum $LL'$ forms a link with bonds given by the concatenation  $\mathbf{\hat b}\oplus \mathbf{\hat b'} = (b_1,b_2,\ldots,b_d, b'_1,b'_2,\ldots,b'_{d'})$ and a coloring given by $c\oplus c' = (c_1,c_2,\ldots,c_d, c'_1,c'_2,\ldots,c'_{d'})\in \N^{d+d'}$.

The HOMFLYPT skein module of ordered bonded links, $\ohsm(R,l,m)$, is defined as $R[\olinks]$ quotiented by the skein relations.
The module $\ohsm(R,l,m)$ has a natural grading given by the number of bonds:
$$\ohsm(R,l,m) = \bigoplus_{d=0}^{\infty} \ohsms{d}(R,l,m),$$
where terms in $\ohsms{d}(R,l,m)$ consists of classes of links with $d$ bonds.
Furthermore, each $\ohsms{d}(R,l,m)$ can be graded even further by the coloring:
$$\ohsms{d}(R,l,m) = \bigoplus_{c\in \N^d}\ohsms{d,c}(R,l,m).$$

The next two propositions show that the module $\ohsms{d,c}(R,l,m)$ is freely generated by the set 
$$\obasiss{d,c} = \Big\{ A_{c_1}A_{c_2}\cdots A_{c_d} \mid A_{c_i} \in \{ \Tci, \bTci, \Hci, \bHci \},\, i = 1,\ldots,d  \Big\} \cup \{\unknot\}.$$


\begin{proposition}\label{prop:1}
The module $\ohsms{d,c}(R,l,m)$ is generated by $\obasiss{d,c}$.
\end{proposition}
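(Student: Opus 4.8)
The plan is to show that any class in $\ohsms{d,c}(R,l,m)$ can be rewritten, modulo the skein submodule and ambient isotopy, as an $R$-linear combination of elements of $\obasiss{d,c}$. I would argue by induction on the number of crossings in a diagram of an ordered bonded link $L$ with $d$ bonds colored by $c$. Since the ordering of the bonds and their colors are preserved by all the moves, there is no loss in working within a fixed graded piece throughout.

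First I would deal with the base case and the ``local'' reductions. Fix a diagram $D$ of $L$. For each bond $b_i$ together with the two arcs of $L$ attached to it, one can use the skein relation \eqref{eq:skein} applied to crossings of $L$ with itself in a neighborhood of that bond, plus Reidemeister moves I--IV and RV, to simplify the pattern around $b_i$: the goal is to reach, for each bond, one of the four local models $\Tci,\bTci,\Hci,\bHci$ depending on whether the bond is parallel or antiparallel (Figure~\ref{fig:parallel}) and on how the two attached strands connect up globally. The point of introducing the \emph{ordered} module is precisely that one may now ``process'' bond $b_1$ first, then $b_2$, and so on, canonically, without the ambiguity of choosing which bond to treat. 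After all bonds have been put in standard local form, what remains is an ordinary oriented link diagram (the backbone, with the standardized bonded pieces plugged in); by the classical HOMFLYPT computation in $S^3$ one reduces the backbone part to a multiple of the unknot, i.e. to the disjoint-sum normal form. Using Lemma~\ref{lemma:sum} one converts any connected sums that appear into disjoint sums, which is what lets us land in $\obasiss{d,c}$ (products of the elementary pieces, possibly with extra unknot components absorbed).

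Concretely, the induction step goes: pick a crossing in $D$. If it is a self-crossing of the backbone or a crossing between the backbone and a bond away from a vertex, apply \eqref{eq:skein} to it; the two ``smoothed'' terms have strictly fewer crossings, and the crossing-change term is isotopic to a diagram that, after a bounded local cleanup, also has fewer crossings — so the induction hypothesis applies to all three terms. Special care is needed near a vertex: there one uses the Reidemeister moves IV, IV' to slide strands past the bond endpoint, and RV to untwist, so that crossings can be pushed away from the vertex and then smoothed as above. Iterating, every diagram is reduced to a linear combination of diagrams with no crossings, each of which is (isotopic to) a disjoint union of the elementary bonded components $\Tci,\bTci,\Hci,\bHci$ and some number of unknots; the unknots are absorbed into the coefficient (using the known value of the HOMFLYPT invariant of the unknot, which is invertible by the hypotheses on $R$), leaving an element of $R\,\obasiss{d,c}$.

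The main obstacle I expect is the vertex-local analysis: verifying that near each bond endpoint one can genuinely reduce the crossing number using only moves I--IV, IV', RV together with \eqref{eq:skein}, and that the reduction terminates — i.e. that the crossing-change term really does admit a bounded cleanup rather than reintroducing complexity. This is exactly the subtlety Kauffman flags (``arbitrary braiding at a vertex''), which is why the \emph{rigid} move RV rather than the full move V is essential here; with RV in hand the braiding at a vertex is controlled, and a careful case check over the finitely many local vertex configurations (parallel vs. antiparallel, and the four strand-connection patterns) should close the argument. A secondary, more bookkeeping-type point is ensuring the connected-sum-to-disjoint-sum conversion via Lemma~\ref{lemma:sum} is applied consistently so that the output is literally a product of elementary pieces in the prescribed color order $c_1,\dots,c_d$; this is routine given that the disjoint sum is commutative and the bond order is an invariant of the graded piece.
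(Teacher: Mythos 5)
There is a genuine gap, and it is at the heart of the matter: the step where you ``simplify the pattern around $b_i$'' so that each bond is brought into one of the four local models and the diagram then splits off the elementary pieces. Moves I--IV, RV together with \eqref{eq:skein} applied to crossings near a bond cannot do this. Isolating a bond (which is indeed achievable with move IV, and is what the paper does) only guarantees that the bond itself carries no crossings; the two backbone strands attached to it still run off into an arbitrarily knotted diagram, so the result is \emph{not} a disjoint union, nor even visibly a connected sum, of $\Tci$, $\bTci$, $\Hci$, $\bHci$ with a classical link. The bonded trefoil $A$ of Figure~\ref{fig:ex2} is the simplest counterexample: its bond is already crossing-free, yet no sequence of moves I--IV, RV splits off a $\Theta$ or $H$ factor. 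What is actually needed is an algebraic bond-cutting identity: one introduces auxiliary crossings next to the isolated bond, applies \eqref{eq:skein} in two different ways to obtain a pair of linear relations (\eqref{eq:anti1}--\eqref{eq:anti2} in the antiparallel case, \eqref{eq:par1}--\eqref{eq:par2} with \eqref{eq:he1}--\eqref{eq:he3} in the parallel case), and solves the resulting system; the link with the bond is then expressed through connected sums with the elementary generators, which Lemma~\ref{lemma:sum} converts to disjoint sums, yielding \eqref{eq:antieq1} and \eqref{eq:pareq1}. Solving that system requires inverting $(l^2+1)^2-l^2m^2=(l^2+1-lm)(l^2+1+lm)$, which is exactly why the hypothesis that $l^2\pm ml+1$ be invertible appears in the definition of $R$; your proposal never invokes this hypothesis, which is a reliable sign the essential idea is missing. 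With these identities in hand, the natural induction is on the number of bonds $d$ (each application of \eqref{eq:antieq1} or \eqref{eq:pareq1} trades a $d$-bond link for $(d-1)$-bond links times elementary factors), not on crossing number.

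A secondary problem is the crossing-number induction itself: in \eqref{eq:skein} the crossing-change term has the same number of crossings as the original diagram, and it is not true that it admits a ``bounded local cleanup'' reducing crossings. The standard repair (as in the classical HOMFLYPT argument, inducting on the number of crossing switches needed to reach a descending/ascending diagram) handles the backbone, but even with that repair your scheme has no mechanism for eliminating bonds, so the argument does not close. In short, the reduction of the backbone is the routine part; the proposition lives or dies on the bond-removal formulas, and those cannot be obtained by local moves alone.
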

\begin{proof}
We prove the lemma by induction on $d$. For $d=0$, we do not have any bonds and the module corresponds to the HOMFLYPT skein module of classical links in $S^3$ and is thus freely generated by the unknot $\unknot$ (see~\cite{Przytycki2006, Paris2013}).

Let $c_{-d} = (c_1,c_2,\ldots,c_{d-1})$\footnote{Notation borrowed from game theory.}.
For the induction step we
show that in the module every $d$-component link can be expressed as linear combination of products of elements in $\ohsms{d-1,c_{-d}}(R,l,m)$ and elements in $\hat\pi_{c_d} = \{\Theta_{c_d},\bTheta_{c_d},H_{c_d},\bH_{c_d}\}$. 
In order to show this, we must first isolate the last bond, with index $d$, so that it does not contain any crossings (as in Figure~\ref{fig:or1} and Figure~\ref{fig:or2}).  This can always be achieved using move IV repetitively.

We split the proof into two cases. First, let the bond be antiparallel. 
In this case we have the following equality (as in all subsequent diagrammatic computations, we omit coloring the bonds in all but the first step):

\begin{equation} \label{eq:anti1}
\begin{split}
\Qtext{1}{28}{45}{c_d} & \isoteq \Q{3} \Heq -l^2 \Q{5} -lm\Q{4} \\
& \Heq \Q{7} + lm\Q{6} -lm\Q{4} \\
& \Heq -l^2 \Q{9} - lm \Q{8} + lm\Q{6} -lm\Q{4} \\
& \isoteq -l^2 \Q{1} - lm \Q{12} + lm\Q{11} -lm\Q{10} 
\end{split}
\end{equation}

Terms on the right-hand side are connected sums with an elementary generator in $\hat\pi_{c_d}$, except the first and the third term. We transform the third term:

\begin{equation} \label{eq:anti2}
\begin{split}
\Qtext{11}{43}{57}{c_d}  & \isoteq \Q{13} \Heq -l^{-2} \Q{15}-\frac{m}{l}\Q{14} \\
& \Heq \Q{17} + \frac{m}{l}\Q{16} -\frac{m}{l}\Q{14} \\
& \Heq -l^{-2} \Q{19} - \frac{m}{l} \Q{18} + \frac{m}{l}\Q{16} -\frac{m}{l}\Q{14} \\
& \isoteq -l^{-2} \Q{11} - \frac{m}{l} \Q{21} + \frac{m}{l}\Q{1} -\frac{m}{l}\Q{20} 
\end{split}
\end{equation}

We solve Equations~\eqref{eq:anti1} and~\eqref{eq:anti2} for the diagrams on the left-hand side and obtain the solution:

\begin{equation} \label{eq:antisol1}
\begin{split}
-(l^4 + 2l^2 + 1 - l^2m^2)\Q{1}  = (l^3+l)m&  \Bigg( \Q{10}  + \Q{12} \Bigg) \\
  + l^2m^2  & \Bigg( \Q{21}  + \Q{20} \Bigg) 
\end{split}
\end{equation}
and
\begin{equation} \label{eq:antisol2}
\begin{split}
-(l^4 + 2l^2 + 1 - l^2m^2)\Q{11}  = (l^3+l)m &  \Bigg( \Q{21}  + \Q{20} \Bigg) \\
  + l^2m^2  & \Bigg( \Q{10}  + \Q{12} \Bigg) 
\end{split}
\end{equation}

Note that  $(l^4 + 2l^2 + 1 - l^2m^2) = (l^2+1-lm)(l^2+1+lm)$ is invertible in $R$. 
The right-hand side terms are connected sums with elementary generators from  $\hat\pi_{c_d}$.
We cut out the elementary generators from \eqref{eq:antisol1} using Lemma~\ref{lemma:sum} and obtain:
\begin{equation} \label{eq:antieq1}
(l^4 + 2l^2 + 1 - l^2m^2)\icn{1}  \sumeq l^2m^2  \bigg(   \icn{2} \cdot H_{c_d} + \icn{5} \cdot \Theta_{c_d} \bigg) \\
  + \frac{l^3m^3}{1+l^2}   \bigg(  \icn{2}\cdot \Theta_{c_d}   + \icn{5}\cdot H_{c_d} \bigg).
\end{equation}


From \eqref{eq:antisol2} we obtain an equivalent (symmetric) formula:
\begin{equation} \label{eq:antieq2}
(l^4 + 2l^2 + 1 - l^2m^2)\icn{23}  \sumeq l^2 m^2  \bigg(  \icn{5}\cdot H_{c_d} + \icn{2}\cdot \Theta_{c_d} \bigg) \\
  + \frac{l^3m^3}{1+l^2}     \bigg( \icn{5} \cdot \Theta_{c_d}  +   \icn{2} \cdot H_{c_d}    \bigg). 
\end{equation}

In the second case, when the bond is parallel, we repeat a similar calculation to that of Equation~\eqref{eq:anti1}, except that the bottom arc is reversed:
\begin{equation} \label{eq:par1}
\begin{split}
\Qtext{2}{28}{45}{c_d}  &\isoteq \Q{22}  \Heq  -\frac{1}{l^2} \Q{24}  -\frac{m}{l} \Q{23} \\
                       &\Heq \Q{26}   +\frac{m}{l} \Q{25}  -\frac{m}{l}  \Q{23} \\
       &\Heq -l^2 \Q{28}   -lm \Q{27} +\frac{m}{l} \Q{25}  -\frac{m}{l}  \Q{23} \\
&\isoteq -l^2 \Q{2} - lm \Q{31} + \frac{m}{l} \Q{30}  -\frac{m}{l} \Q{29}
\end{split}
\end{equation}


We isotope the bottom right strand under the rest of the diagram and repeat the calculation:
\begin{equation} \label{eq:par2}
\begin{split}
\Qtext{2}{28}{45}{c_d}  &\isoteq \Q{32}  =  -l^2 \Q{34}  -lm \Q{33} \\
                       &\Heq \Q{36}   +lm \Q{35}  -lm \Q{33} \\
       &\Heq -\frac{1}{l^2}\Q{28}   -\frac{m}{l} \Q{37}  +lm \Q{35} -lm \Q{33} \\
&\isoteq -\frac{1}{l^2} \Q{2} -\frac{m}{l} \Q{31} + lm\Q{40} -lm\Q{39}
\end{split}
\end{equation}

We also have the following HOMFLYPT skein relations:
\begin{equation} \label{eq:he1}
-\frac{m}{l}\Q{29} - lm\Q{39} \Heq m^2\Q{2}
\end{equation}
\begin{equation} \label{eq:he2}
\frac{m}{l}\Q{50} \Heq -lm\Q{44} - m^2\Q{45}
\end{equation}
\begin{equation} \label{eq:he3}
lm\Q{51} \Heq -\frac{m}{l}\Q{44} - m^2\Q{46}
\end{equation}

Adding Equations \eqref{eq:par1} and \eqref{eq:par2} and using Equations \eqref{eq:he1}, \eqref{eq:he2} and \eqref{eq:he3} we get after factorizing
\begin{equation} \label{eq:parsol1}
\begin{split}
(l^2 + l^{-2}+2-m^2)\Q{2}  = -(l+1^{-1})m  & \Bigg(\Q{31} + \Q{44}\Bigg)\\
-m^2 & \Bigg(\Q{45} +  \Q{46} \Bigg).
\end{split}
\end{equation}

Again, the elementary generators from the right-hand sides can be cut off using Lemma~\ref{lemma:sum}. We obtain the formula: 
 \begin{equation} \label{eq:pareq1}
(l^4 + 2l^2 + 1 - l^2m^2)\icn{3}  \sumeq l^2m^2  \bigg(  \icn{4} \cdot \bH_{c_d}  + \icn{7} \cdot \bTheta_{c_d}  \bigg) \\
  + \frac{l^3m^3}{1+l^2}   \bigg(  \icn{4}\cdot \bTheta_{c_d} +  \icn{7}\cdot \bH_{c_d}  \bigg).
\end{equation}

\end{proof}

To obtain a mirror version of Formula \eqref{eq:pareq1} we take the following two HOMFLYPT skein relations:

\begin{equation} \label{eq:he4}
lm\Q{53} \Heq -\frac{m}{l}\Q{47} - m^2\Q{49}
\end{equation}
\begin{equation} \label{eq:he5}
\frac{m}{l}\Q{52} \Heq -lm\Q{47} - m^2\Q{48}
\end{equation}

Similar as before, we add Equations \eqref{eq:par1} and \eqref{eq:par2} and using Equations \eqref{eq:he1}, \eqref{eq:he4} and \eqref{eq:he5} we get:

 \begin{equation} \label{eq:pareq2}
(l^4 + 2l^2 + 1 - l^2m^2)\icn{3}  \sumeq l^2m^2  \bigg(  \icn{4} \cdot \bH_{c_d}  +  \icn{6} \cdot \bTheta_{c_d}^-  \bigg) \\
  + \frac{l^3m^3}{1+l^2}   \bigg(\icn{4}\cdot \bTheta_{c_d}^- +  \icn{6}\cdot \bH_{c_d}   \bigg).
\end{equation}


\begin{proposition}
The module $\ohsms{d,c}(R,l,m)$ is freely generated by $\obasiss{d,c}$.
\end{proposition}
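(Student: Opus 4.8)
The plan is to deduce the statement from Proposition~\ref{prop:1}. That result already gives a surjection $\psi$ from the free $R$-module $F=\bigoplus_{B\in\obasiss{d,c}}R\,e_B$ onto $\ohsms{d,c}$ (sending $e_B\mapsto B$), so it suffices to produce an $R$-linear map $\Phi\colon\ohsms{d,c}\to F$ with $\Phi\circ\psi=\mathrm{id}_F$, equivalently a well-defined invariant of ordered bonded links with $d$ bonds of colors $c$, valued in $F$, with $\Phi(B)=e_B$ for every generator $B\in\obasiss{d,c}$. Any such $\Phi$ forces the elements of $\obasiss{d,c}$ to be $R$-linearly independent in $\ohsms{d,c}$, which together with Proposition~\ref{prop:1} exhibits $\obasiss{d,c}$ as a free basis.

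I would construct $\Phi$ by induction on the number of bonds $d$. For $d=0$ there are no bonds, $\ohsms{0}(R,l,m)$ is the classical HOMFLYPT skein module of $S^3$, which by~\cite{Przytycki2006,Paris2013} is free on $\unknot$, and $\Phi$ is the classical HOMFLYPT polynomial. For the inductive step, given an ordered bonded link with $d\ge 1$ bonds, first use move IV repeatedly to isolate the last bond $b_d$ in a crossingless standard neighbourhood, exactly as in the proof of Proposition~\ref{prop:1}; then apply the identities~\eqref{eq:antieq1}--\eqref{eq:antieq2} (antiparallel case) or~\eqref{eq:pareq1}--\eqref{eq:pareq2} (parallel case), which hold in $\ohsms{d,c}$, to rewrite the class as an explicit $R$-combination of disjoint sums $L'\sqcup A_{c_d}$ with $A_{c_d}\in\{\Theta_{c_d},\bTheta_{c_d},H_{c_d},\bH_{c_d}\}$ and $L'$ an ordered bonded link with $d-1$ bonds of colors $c_{-d}$. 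Applying the inductive map to the factor $L'$ and recording the label $A_{c_d}$ lands the value in $\bigoplus R\,e_{B'}\otimes\{\Theta_{c_d},\bTheta_{c_d},H_{c_d},\bH_{c_d}\}$, which under the natural identification $\obasiss{d-1,c_{-d}}\times\{\Theta_{c_d},\bTheta_{c_d},H_{c_d},\bH_{c_d}\}=\obasiss{d,c}$ is exactly $F$; any trivial circles created by these reductions are absorbed using the $S^3$-skein relation~\eqref{eq:skein} and Lemma~\ref{lemma:sum}.

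The main obstacle, and the bulk of the work, is to check that $\Phi$ is genuinely well defined and invariant: independent of the order in which link--link crossings are resolved, of the way $b_d$ is brought into standard position, and — crucially — of the choice of which bond plays the role of the ``last'' one; and unchanged under the Reidemeister moves I--IV and RV and by the HOMFLYPT submodule. I would organise this as a finite list of local confluence checks. Resolving two link--link crossings in either order agrees by the standard HOMFLYPT manipulation; a crossing passing through the disc of a skein triple is handled by~\eqref{eq:skein} itself; a crossing or a bond endpoint sliding past the standard neighbourhood of $b_d$ is absorbed by the derived formulas together with move IV; and independence of the choice of last bond is precisely the symmetry displayed by the pairs~\eqref{eq:antieq1}/\eqref{eq:antieq2} and~\eqref{eq:pareq1}/\eqref{eq:pareq2}, which say that stripping bond $i$ and then bond $j$ yields the same element of $F$ as stripping $j$ and then $i$. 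I expect this bookkeeping, rather than any single hard idea, to be the delicate part.

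Finally, evaluating $\Phi$ on a generator $B=A_{c_1}\cdots A_{c_d}\in\obasiss{d,c}$ is immediate: its bonds are already pairwise disjoint and in standard position, so one step of the procedure strips off $b_d$ and returns $A_{c_1}\cdots A_{c_{d-1}}$ together with the label $A_{c_d}$, and the induction hypothesis gives $\Phi(B)=e_B$. Hence $\Phi\circ\psi=\mathrm{id}_F$, so $\psi$ is injective, hence an isomorphism, and $\obasiss{d,c}$ is a free basis of $\ohsms{d,c}(R,l,m)$; summing over all $d$ and $c$ yields that $\ohsm(R,l,m)$ is free on $\obasiss$.
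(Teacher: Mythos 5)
There is a genuine gap: your entire argument rests on the map $\Phi$ being well defined, and that is exactly the point you do not establish. The identities \eqref{eq:antieq1}--\eqref{eq:antieq2} and \eqref{eq:pareq1}--\eqref{eq:pareq2} are equalities \emph{in the quotient module} $\ohsms{d,c}(R,l,m)$, so using them as rewriting rules to define a map \emph{out of} that quotient is circular unless you descend to the level of diagrams and verify, by hand, that the output is unchanged under every choice made along the way: the sequence of move-IV slides used to isolate $b_d$ (different isolations differ by arbitrary isotopies of the bond's neighbourhood through the rest of the diagram), the order in which link--link crossings are switched, the Reidemeister moves I--IV and RV applied before reduction, and compatibility with the submodule generated by \eqref{eq:skein}. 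You list these as ``local confluence checks'' and assert they follow from standard manipulations, but none is carried out, and they are not routine: this confluence is essentially equivalent to the freeness you are trying to prove. Moreover, your stated mechanism for one of the checks is wrong in substance: the symmetry between \eqref{eq:antieq1} and \eqref{eq:antieq2} relates the two planar positions of a \emph{single} antiparallel bond (the $\Theta_{c_d}$ versus $\bTheta_{c_d}$-type local pictures), not the commutation of stripping bond $i$ before bond $j$; in the ordered module that particular independence is vacuous anyway, since the $d$-th bond is singled out by the ordering, which signals that the proposed checklist has not been matched against what actually needs proving.

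The paper avoids this entire problem by never constructing a splitting to $F$ algorithmically. Instead it defines four maps $g_{d,0},g_{d,\infty},g_{d,+},g_{d,-}\colon\olinkss{d,c}\to\olinkss{d-1,c_{-d}}$ that simply replace the last bond by a fixed local tangle (the two smoothings and the two crossings). These are well defined on rigid isotopy classes by Kauffman's rigid-vertex theorem and commute with the skein relation because the replacement is local at the bond, so they descend to module maps $g^*_{d,\bullet}$. Applying all four to a hypothetical relation $\sum_A r(A)A=0$, using the induction hypothesis on $d-1$ bonds and the evaluations of the $g^*_{1,\bullet}$ on $\Theta_{c_d},\bTheta_{c_d},H_{c_d},\bH_{c_d}$, yields a $4\times 4$ linear system whose determinant $l^{-5}m^{-3}(l^2+lm+1)^2(l^2-lm+1)^2$ is invertible in $R$, forcing all coefficients to vanish. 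If you want to salvage your approach, the fix is to replace your recursively defined $\Phi$ by such manifestly well-defined bond-replacement maps and extract linear independence from their values on the elementary generators, rather than attempting the confluence argument.
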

\begin{proof}
We define four $R$-linear maps $g_{d,0}$, $g_{d,\infty}$, $g_{d,+}$, $g_{d,-} : \olinkss{d,c} \rightarrow \olinkss{d-1,c_{-d}}$, which locally replace the last $d$-th bond of each generator with a non-bond according to the following conventions:
\begin{center} 
\begin{tabular}{rlrl}
$g_{d,0}:$ & $\icn{1} \mapsto \icn{2}$ & \qquad\qquad $g_{d,\infty}:$ & $\icn{1} \mapsto \icn{5}$ \\[1em]
& $\icn{3} \mapsto \icn{4}$ &                                    & $\icn{3} \mapsto 0$ \\[2em]
$g_{d,+}:$ & $\icn{1} \mapsto 0$       & \qquad\qquad $g_{d,-}:$ & $\icn{1} \mapsto 0$ \\[1em]
& $\icn{3} \mapsto \icn{6}$ &                                    & $\icn{3} \mapsto \icn{7}$ \\
\end{tabular}
 \end{center}

It is easy to see that the maps  $g_{d,0}$, $g_{d,\infty}$, $g_{d,+}$, and $g_{d,-}$ are well defined (see Theorem 3.1 in~\cite{Kauffman1989}). The most non-trivial case is perhaps invariance of $g_{d,\infty}$ under move RV when the bond is antiparallel. In this case invariance is due to the fact that the
following diagram commutes:

\begin{center}
\begin{tikzcd}
\icn{8} \arrow[r,mapsto, "g_{d,\infty}"] \arrow[d,equal, "\text{V}"] & \icn{10}  \arrow[d,equal, "\text{I,I}"] \\
\icn{9} \arrow[r,mapsto, "g_{d,\infty}"]& \icn{11}
\end{tikzcd}
\end{center}

The four maps can be extended $R$-linearly to maps $g_{d,0}$, $g_{d,\infty}$, $g_{d,+}$, $g_{d,-} : R\olinkss{d,c} \rightarrow R\olinkss{d-1,c_{-d}}$,
which induce the following maps on the module: 
$$g_{d,0}^*, g_{d,\infty}^*, g_{d,+}^*, g_{d,-}^* : \ohsms{d,c}(R,l,m)  \rightarrow \ohsms{d-1,c_{-d}}(R,l,m).$$

On the elementary links $\Theta_{c_d}$,  $\bTheta_{c_d}$, $H_{c_d}$, and $\bH_{c_d}$ maps $g_{1,0}^*$, $g_{1,\infty}^*$, $g_{1,+}^*$, and $g_{1,-}^*$ take the values presented in Table~\ref{tab:vals}. In these calculations we used the following two equalities that hold in $\ohsms{0}(R,l,m)$:
$$\unlinkfig \sumeq \frac{-(l+l^{-1})}{m} \unknotfig \qquad \text{and} \qquad \raisebox{-0.9em}{\includegraphics[page=18]{icons}} \Heq -\frac{1}{l^2} \unlinkfig -\frac{m}{l} \unknotfig \sumeq \Big(\frac{l+l^{-1}}{l^2m} -\frac{m}{l}\Big) \unknotfig.$$

\begin{table}[ht]
\begin{tabular}{|l|c|c|c|c|}\hline
 & $ \smgenth{c_d}$ & \smgenthb{c_d} & \smgenH{c_d} & \smgenHb{c_d} \\[0.25em]\hline
$g_{1,0}^*$       & \unknotfig                         & \unknotfig     & $\frac{-(l+l^{-1})}{m} \unknot$ & $\frac{-(l+l^{-1})}{m} \unknotfig$\\[0.25em]
$g_{1,\infty}^*$  & $\frac{-(l+l^{-1})}{m} \unknotfig$ & 0 & \unknotfig & 0 \\[0.25em]
$g_{1,+}^*$            & 0 & $\frac{l^2-m^2l^2 +1}{l^3m} \unknotfig$ & 0 & \unknotfig \\[0.25em] 
$g_{1,-}^*$     & 0 & $\frac{-(l+l^{-1})}{m} \unknotfig$ & 0 & $\unknotfig$ \\[0.25em]\hline
\end{tabular}  \caption{Values of elementary links $\Theta_{c_d}$, $\bTheta_{c_d}$, $H_{c_d}$, and $\bH_{c_d}$ on maps  $g_{1,0}^*$, $g_{1,\infty}^*$, and $g_{1,\pm}^*$.}\label{tab:vals}
\end{table}

Note that for $B \in \obasiss{d-1,c_{-d}}$ and $b \in \{ \Tci, \bTci, \Hci, \bHci \}$ it holds that $g^*_d (B b) = B g^*_1(b)$.

Let us assume that an arbitrary formal sum in $\ohsms{d,c}(R,l,m)$ takes the value $0$,
$$\sum_{A \in \obasiss{d,c}} \!\!\! r(A) \, A = 0,$$
where $r(A) \in R$. We apply $g_{d,0}^*$ to the sum and obtain

\begin{equation}\label{eq:1a}\tag{12a}\begin{split}
0 &= \kern-1.4em\sum_{B \in \obasiss{d-1,c_{-d}}}\kern-1.4em B\,\Big( 
r(B \Theta_{c_d}) \,g_{1,0}^*(\Theta_{c_d}) +
r(B \bTheta_{c_d}) \,g_{1,0}^*(\bTheta_{c_d}) +
r(B H_{c_d}) \,g_{1,0}^*(H_{c_d}) +
r(B \bH_{c_d}) \,g_{1,0}^*(\bH_{c_d})
\Big)
\\
&= \kern-1.4em\sum_{B \in \obasiss{d-1,c_{-d}}}\kern-1.4em B\,\Big( 
r(B \Theta_{c_d})  +
r(B \bTheta_{c_d})  +
\tfrac{-(l+l^{-1})}{m}  r(B H_{c_d}) +
\tfrac{-(l+l^{-1})}{m}  r(B \bH_{c_d}) 
\Big).
\end{split}\end{equation}

Similarly we apply $g_{d,\infty}^*$ and obtain

\begin{equation}\label{eq:1b}\tag{12b}\begin{split}
0 &= \kern-1.4em\sum_{B \in \obasiss{d-1,c_{-d}}}\kern-1.4em B\,\Big( 
r(B \Theta_{c_d}) \,g_{1,\infty}^*(\Theta_{c_d}) +
r(B \bTheta_{c_d}) \,g_{1,\infty}^*(\bTheta_{c_d}) +
r(B H_{c_d}) \,g_{1,\infty}^*(H_{c_d}) +
r(B \bH_{c_d}) \,g_{1,\infty}^*(\bH_{c_d})
\Big)
\\
&= \kern-1.4em\sum_{B \in \obasiss{d-1,c_{-d}}}\kern-1.4em B\,\Big( 
\tfrac{-(l+l^{-1})}{m} r(B \Theta_{c_d})  +
r(B H_{c_d})
\Big).
\end{split}\end{equation}

By applying the last two maps $g_{d,\pm}^*$ we obtain

\begin{equation}\tag{12c}\begin{split}\label{eq:1c}
0 &= \kern-1.4em\sum_{B \in \obasiss{d-1,c_{-d}}}\kern-1.4em B\,\Big( 
r(B \Theta_{c_d}) \,g_{1,+}^*(\Theta_{c_d}) +
r(B \bTheta_{c_d}) \,g_{1,+}^*(\bTheta_{c_d}) +
r(B H_{c_d}) \,g_{1,+}^*(H_{c_d}) +
r(B \bH_{c_d}) \,g_{1,+}^*(\bH_{c_d})
\Big)
\\
&= \kern-1.4em\sum_{B \in \obasiss{d-1,c_{-d}}}\kern-1.4em B\,\Big( 
\tfrac{l^2-m^2l^2 +1}{l^3m} r(B \bTheta_{c_d})  +
r(B \bH_{c_d})
\Big)
\end{split}\end{equation}

and

\begin{equation}\tag{12d}\begin{split}\label{eq:1d}
0 &= \kern-1.4em\sum_{B \in \obasiss{d-1,c_{-d}}}\kern-1.4em B\,\Big( 
r(B \Theta_{c_d}) \,g_{1,-}^*(\Theta_{c_d}) +
r(B \bTheta_{c_d}) \,g_{1,-}^*(\bTheta_{c_d}) +
r(B H_{c_d}) \,g_{1,-}^*(H_{c_d}) +
r(B \bH_{c_d}) \,g_{1,-}^*(\bH_{c_d})
\Big)
\\
&= \kern-1.4em\sum_{B \in \obasiss{d-1,c_{-d}}}\kern-1.4em B\,\Big( 
\tfrac{-(l+l^{-1})}{m} r(B \bTheta_{c_d})  +
r(\bH_{c_d}B)
\Big).
\end{split}\end{equation}

By the induction hypothesis Equations \eqref{eq:1a}\,--\,\eqref{eq:1d} yield the following linear system of equations:

\begin{alignat*}{5}
 r(B\Theta_{c_d}) & {}+{} & r(B\bTheta_{c_d}) & {}-{} &\tfrac{(l+l^{-1})}{m}  r(BH_{c_d}) & {}-{} &\tfrac{(l+l^{-1})}{m} r(BH_{c_d}) & {}={} & 0 \\
-\tfrac{(l+l^{-1})}{m}  r(B \Theta_{c_d}) & {}+{} &  &  &  r(BH_{c_d})    & & & {}={} &  0 \\
 &  & \tfrac{l^2-m^2l^2 +1}{l^3m} r(B\bTheta_{c_d}) &  &   & {}+{} & r(BH_{c_d})  & {}={} & 0 \\
 &{}-{}  & \tfrac{(l+l^{-1})}{m}  r(B\bTheta_{c_d}) &  &   & {}+{} & r(BH_{c_d})  & {}={} & 0 
\end{alignat*}

The determinant of this system is $l^{-5} m^{-3} (l^2 + l m + 1)^2 ( l^2 - l m + 1)^2$, which is invertible in $R$.
It holds $r(B\Theta_{c_d}) = r(B\bTheta_{c_d}) = r(BH_{c_d}) = r(B\bH_{c_d}) = 0$ for all $B \in \obasiss{d-1,c_{-d}}$.

\end{proof}

\begin{proof}[Proof of theorem \ref{thm:main2}] 

As in the case of $\ohsm(R,l,m)$, the module $\rhsm(R,l,m)$ also has a natural grading given by the number of bonds $d$: $$\rhsm(R,l,m) = \bigoplus_{d=1}^\infty \rhsms{d}(R,l,m).$$ 
In order to prove the theorem, it is enough to show that $\rhsms{d}(R,l,m)$ is freely generated by the set 
$$\rbasiss{d} = \bigg\{\prod_{i=1}^k \Theta_i^{m_{i}} \bTheta_i^{m'_{i}} H_i^{n_{i}} \bH_i^{n'_{i}} \mid k\in\mathbb N; \;\vec m, \vec m', \vec n, \vec n' \in \mathbb N_0^k\setminus \vec 0,\,  \sum_{i=1}^k \big(m_i+m'_i+n_i+n'_i\big) = d \bigg\} \cup \{ \unknot \},$$
consisting of unordered $d$-products of  colored elementary generators.

The symmetric group $S_d$ acts on the coloring $c=(c_1,c_2,\ldots,c_d)$ by permuting the coordinates. This action also permutes elements in the basis $\obasiss{d}$ and the module $\ohsms{d}(R,l,m) / S_d$ is freely generated by the orbits of $S_n$, which coincide with $\rbasiss{d}$. It follows that $\rhsms{d}(R,l,m) \cong \ohsms{d}(R,l,m) / S_d$, that is, $\rhsms{d}(R,l,m)$ is freely generated by $\rbasiss{d}$.
\end{proof}


\section{The non-rigid case}\label{sec:non-rigid}

\begin{proof}[Proof of theorem \ref{thm:main}] 

The module $\hsm_d(R,l,m)$ by construction equals to $\rhsms{d}(R,l,m)$ in which we impose the additional isotopy generating move V.
In $\hsm_d(R,l,m)$, move $V$ clearly gives us equalities 
\begin{equation}\label{eq:hsmgens}
\Theta_i = \bTheta_i \qquad \text{and} \qquad  H_i = \bH_i = -\frac{1+l ^2}{lm} \Theta_i,
\end{equation}
the latter equality arises from the HOMFLYPT skein relation $\bTheta_i^* = -l^2 \bTheta -lm \bH_i.$
We are left to show invariance under move V.


For the antiparallel version of the right-handed move V we have:
\begin{equation*}
\begin{split}
(l^4 + 2l^2 + 1 - l^2m^2)\icn{25}  
& \eqeq{eq:pareq1} l^2m^2  \bigg(  \icm{27} \cdot \bH_i +  \icm{26} \cdot \bTheta_i  \bigg)  + \frac{l^3m^3}{1+l^2}   \bigg( \icm{26}\cdot \bH_i  + \icm{27}\cdot \bTheta_i   \bigg) \\
&= l^2m^2  \bigg(     \icm{28} \cdot H_i + \icm{29} \cdot \Theta_i\bigg)  + \frac{l^3m^3}{1+l^2}   \bigg( \icm{29}\cdot H_i  +\icm{28}\cdot \Theta_i  \bigg) \\
& \eqeq{eq:antieq1} (l^4 + 2l^2 + 1 - l^2m^2)\icm{30}.  
\end{split}
\end{equation*}

Similarly, for the parallel version of the right-handed move V we have:

\begin{equation*}
\begin{split}
(l^4 + 2l^2 + 1 - l^2m^2)\icn{31}  
& \eqeq{eq:antieq1} l^2m^2  \bigg(   \icm{33} \cdot \bH_i +  \icm{32} \cdot \bTheta_i \bigg)  + \frac{l^3m^3}{1+l^2}   \bigg( \icm{32}\cdot \bH_i +\icm{33}\cdot \bTheta_i   \bigg) \\
&= l^2m^2  \bigg(   \icm{34} \cdot H_i +  \icm{32} \cdot \Theta_i \bigg)  + \frac{l^3m^3}{1+l^2}   \bigg( \icm{32}\cdot H_i  + \icm{34}\cdot \Theta_i  \bigg) \\
& \eqeq{eq:pareq2} (l^4 + 2l^2 + 1 - l^2m^2)\icm{36}.  
\end{split}
\end{equation*}

The left-handed twist is treated analogously.

\end{proof}

Unfortunately, the HOMFLYPT skein module does not recover any information about the knottedness of the bonds, which is evident from the evaluation of the right-hand side of formula \eqref{eq:antieq1} in $\hsm_d(R,l,m)$:

\begin{equation} \label{eq:antieq1HSM}
\begin{split}
\icn{1}  & \eqeq{eq:antieq1}  \frac{1}{l^4 + 2l^2 + 1 - l^2m^2} \Bigg( l^2m^2  \bigg(   \icn{2} \cdot H_{c_d} + \icn{5} \cdot \Theta_{c_d} \bigg)  + \frac{l^3m^3}{1+l^2}   \bigg(  \icn{2}\cdot \Theta_{c_d}   + \icn{5}\cdot H_{c_d} \bigg) \Bigg) \\
& \eqeq{eq:hsmgens}
\frac{1}{l^4 + 2l^2 + 1 - l^2m^2} \Bigg( l^2m^2  \bigg(  -\frac{1+l ^2}{lm} \;  \icn{2}  + \icn{5}  \bigg)  + \frac{l^3m^3}{1+l^2}   \bigg(  \icn{2}   -\frac{1+l ^2}{lm} \;  \icn{5} \bigg) \Bigg)\cdot \Theta_{c_d}\\
& = -\frac{lm}{1+l^2} \; \icn{2} \cdot \Theta_{c_d}.
\end{split}
\end{equation}

However, if we   appropriately modify the coloring function, we can still extract some information about the bonds as we will see in the next section.







\section{Examples}\label{sec:refined}

In practice, given a link $L$ with $d$ bonds, one can compute the invariant $[L]_\rbasis$, the rigid HOMFLYPT skein module of the class of $L$ in the basis $\rbasis$, by the following set of instructions:
\begin{enumerate}
\item isolate the bonds using move IV,
\item cut out the bonds using Equation~\eqref{eq:antieq1} or Equation~\eqref{eq:pareq1} 
\item compute the HOMFLYPT polynomial $P$ of the remaining part of the classical knot.
\end{enumerate}

We can compute $[L]_\basis$ by removing the bonds using Equation~\eqref{eq:antieq1HSM}.

\begin{example}
We demonstrate the usefulness of the invariants by first showing that it can distinguish bonded structures better than regular $\Theta$-curve analysis.
In~\cite{Dabrowski-Tumanski2019} it is shown that the $\Theta$-curve $\Theta 3_1$ in Figure~\ref{fig:ex1} is the most common non-trivial prime $\Theta$-curve appearing in knotted proteins with bonds.
There are three possible ways we can color edges of a $\Theta$-curve to obtain a bonded knot. For the curve $\Theta 3_1$ we denote the associated bonded knots by $A$, $B$ and $C$ as depicted in Figure~\ref{fig:ex2}. 
While it is an easy exercise to check that bonded knots $B$ and $C$ are isotopic, $A$ and $B$ (or $A$ and $C$) are not.

\begin{figure}
\centering
\subcaptionbox{The $\Theta$-curve $\Theta 3_1$.\label{fig:ex1}}[15em]
{\quad\includegraphics[page=16]{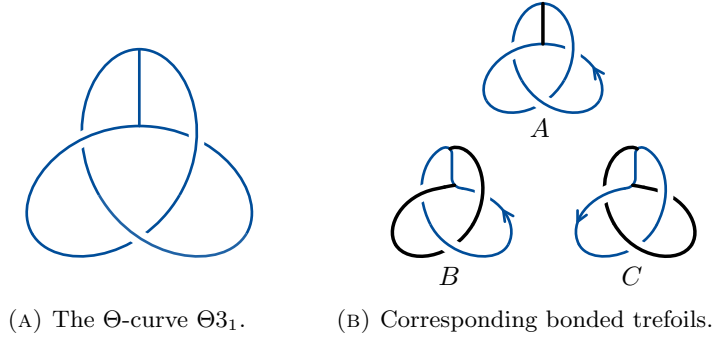}\quad}
\subcaptionbox{Corresponding bonded trefoils.\label{fig:ex2}}
{    
\quad\begin{overpic}[page=15]{example}\put(46,49){$A$}\put(17,2){$B$}\put(75,2){$C$}
\end{overpic}\quad
}
\caption{We can associate three bonded knots to a $\Theta$-curve by coloring each of the edges.\label{fig:ex}
}
\end{figure}

Let us first compute $[A]_{\rbasis}$.
As the bond in knot $A$ does not contain any crossings, we do not need to isolate it. Observe in equation \eqref{eq:pareq1} that if we remove the bond, we obtain the classical trefoil knot and if we replace it with a negative crossing, we obtain the Hopf link. In addition, we need to multiply the expression by $\frac{-l-l^{-1}}{m}$ to reduce the additional unknot (Lemma~\ref{lemma:sum}):

\begin{equation*} 
\begin{split}
\rHSMknot{\icex{8}}  \eqeq{eq:pareq1} &
 \frac{1}{l^4+2l^2+1-l^2m ^2}\cdot \frac{-l-l^{-1}}{m}  \\ 
 & \cdot \Bigg(
		\bigg( l^2 m^2 \,\bH + \frac{l^3 m^3}{1+l^2} \, \bTheta \bigg) \, \hpoly{\icex{9}}
+		\bigg( l^2 m^2 \,\bTheta + \frac{l^3 m^3}{1+l^2} \,\bH \bigg) \, \hpoly{\icex{14}}
\Bigg ) \\
= &\,
 \frac{-lm(1+l^2)}{l^4+2l^2+1-l^2m ^2} \\ 
 & \cdot \Bigg(
		\bigg(  \bH + \frac{l m}{1+l^2} \, \bTheta \bigg) \bigg(   \frac{m^2}{l^2} - \frac{2}{l^2} - \frac{1}{l^4} \bigg)
+		\bigg( \bTheta + \frac{l m}{1+l^2} \,\bH \bigg)   \bigg(    \frac{1}{lm} - \frac{m}{l} + \frac{1}{l^3m}  \bigg) 
\Bigg ) \\
  = & \,
(1^{-2}m^2 -l^{-2})) \, \smbT +		 l^{-3}m \,\smbH  
\end{split}
\end{equation*}

\begin{figure}
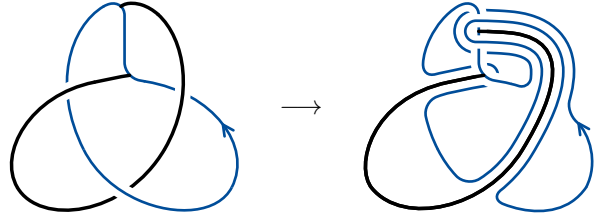

\centering
\includegraphics[page=3]{example} \raisebox{4em}{$\quad \longrightarrow \quad$}
\includegraphics[page=4]{example}
\caption{Isolating a bond.}\label{fig:trefisol}
\end{figure}

On the other hand, as the bond of knot $B$ contains crossings, we isolate it using move IV as depicted in Figure~\ref{fig:trefisol}. By removing the bond we obtain the unknot (cf. Figure~\ref{fig:ex2}) and if we replace it with a crossing, we obtain the link L7n1 in the Thistlethwaite link table, which gives us:

\begin{equation*} 
\begin{split}
\rHSMknot{\icex{11}}  \eqeq{eq:pareq1} &
 \frac{1}{l^4+2l^2+1-l^2m ^2} \cdot \frac{-l-l^{-1}}{m}  
  \cdot \Bigg(
		\bigg( l^2 m^2 \,\bH + \frac{l^3 m^3}{1+l^2} \, \bTheta \bigg) \, \hpoly{\icex{12}} \\ &
+		\bigg( l^2 m^2 \,\bTheta + \frac{l^3 m^3}{1+l^2} \,\bH \bigg) \, \hpoly{\icex{13}} \bigg)  
\Bigg ) \\
= &\,
 \frac{-lm(1+l^2)}{l^4+2l^2+1-l^2m ^2} 
 \cdot \Bigg(
		\bigg(  \bH + \frac{l m}{1+l^2} \, \bTheta \bigg)\\ &
+		\bigg( \bTheta + \frac{l m}{1+l^2} \,\bH \bigg)   \bigg(   -l^3 m + \frac{2l^3}{m} + l m^3    - 4 l m + \frac{3l}{m} - \frac{m}{l} +  \frac{1}{lm} \bigg) 
\Bigg ) \\
  = & \,
(l^2m^2 - 2l^2 + m^2 - 1)\, \smbT + (l m^3 - 2lm) \, \smbH 
\end{split}
\end{equation*}

Viewing the $A$ and $B$ as non-rigid bonded knots, we have:
\begin{equation*} 
\begin{split}
\HSMknot{\icex{8}}  \eqeq{eq:antieq1HSM} &
 -\frac{lm}{1+l^2} \cdot \frac{-l-l^{-1}}{m} \,  \hpoly{\icex{9}} \, \Theta =  (l^{-2}m^2 - 2l^{-2} - l^{-4}) \, \Theta \\
 \HSMknot{\icex{11}}  \eqeq{eq:antieq1HSM} &
 -\frac{lm}{1+l^2} \cdot \frac{-l-l^{-1}}{m} \,  \hpoly{\icex{12}} \, \Theta =  \Theta
\end{split}
\end{equation*}


We conclude that $A \neq B$ viewed either as rigid or non-rigid bonded knots.
\end{example}

In order to better distinguish between non-rigid bonded knots, we can add extra information about the bonds by modifying its coloring function.
For example, we can capture information about the protein's circuit topology (arrangement of its intra-molecular contacts~\cite{Mashaghi2014}) by coloring the bonds with its contact-distance, i.e. minimal number of bond-contacts met while travelling along the knot (see example in Figure~\ref{fig:ct}).

\begin{figure}
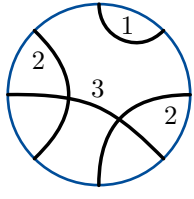

\centering
\begin{overpic}[page=17]{example}\put(61,81){$1$}\put(16,63){$2$}\put(83,35){$2$}\put(46,49){$3$}
\end{overpic}
\caption{Capturing protein's circuit topology by coloring the bonds.\label{fig:ct}
}
\end{figure}

\begin{example}
It is known that toxins from venomous organisms form disulfide-rich peptides and therefore provide good examples of bonded knots.
We compute the refined HOMFLYPT expression of two protein complexes: the toxin CN29 of the Mexican Nayarit Scorpion venom in Figure~\ref{fig:toxin1} and the toxin ADWX-1 of the Chinese scorpion venom in Figure~\ref{fig:toxin2}, where we close the open chains by a direct segment.

\begin{figure}
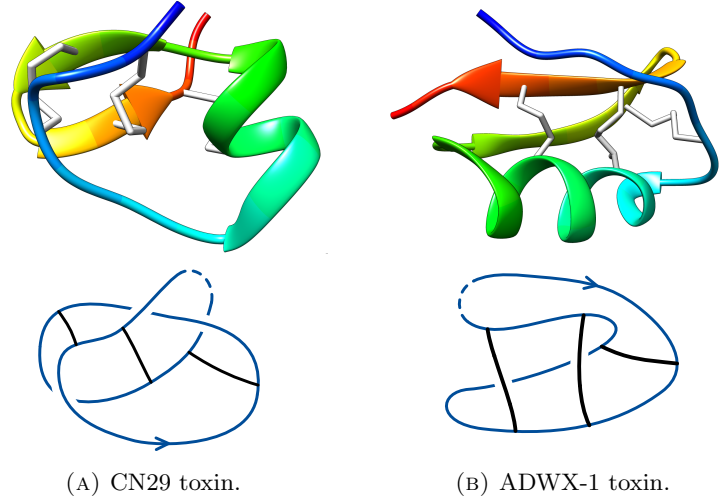

\centering
\subcaptionbox{CN29 toxin.\label{fig:toxin1}}[15em]
{\includegraphics[page=3]{toxin}}
\subcaptionbox{ADWX-1 toxin.\label{fig:toxin2}}[15em]
{\includegraphics[page=4]{toxin}}
\caption{Ribbon diagrams and the associated bonded knots of the toxins from Centruroides noxius (PDB entry 6NW8) (left)  and  Mesobuthus martensii (PDB entry 2K4U) (right).\label{fig:toxin}
}
\end{figure}

The toxin topologies are indeed different as rigid bonded knots in the given projection:

\begin{equation*}
\begin{split}
 \big[K_\text{CN29}\big]_{\rbasis} =\,& \frac{1}{(1 + l^2)^2 (l^4 +2l^2 + l-l^2m^2)^2} \Big ( 
 l^6 m^4(-1 - 3 l^2  - 3 l^4 - l^6 + l^2 m^2 + 2 l^4 m^2) \Ttt \\
&  +  l^5 m^3(1 + 3 l^2  + 3 l^4  + l^6  - m^2 - 6 l^2 m^2 - 6 l^4 m^2 - l^6 m^2 + l^2 m^4 + 3 l^4 m^4) \Tth \\
& + l^7 m^5(-1 - l^2  + l^2 m^2) \ttH + l^6 m^6 (-1 - 2 l^2  + l^2 m^2) \Thh  \\
& + l^6 m^4(-1 - 2 l^2 - l^4 - m^2 - l^2 m^2 + l^4 m^2 + l^2 m^4) \tHh \\
& + l^5 m^5(-1 - 3 l^2  - 2 l^4  + l^2 m^2 + l^4 m^2) \Hhh \Big )
\end{split}
\end{equation*}

\begin{equation*}
\begin{split}
 \big[K_\text{ADWX-1}\big]_{\rbasis} =\,& \frac{1}{(1 + l^2)^2 (l^4 +2l^2 + l-l^2m^2)^2} \Big ( 
l^6 m^4(-1 - 2 l^2 - l^4  + l^4 m^2) \Ttt \\
&+ l^7 m^5 (-4  - 4 l^2 + 2 l^2 m^2) \Tth 
+ l^7 m^5(-1 + l^4) \ttH  \\
&+ l^4 m^4 (1 + 2 l^2  + l^4 - 2 l^2 m^2 - 3 l^4 m^2 + l^4 m^4) \Thh \\
&+ l^6 m^4(-2  - 4 l^2 - 2 l^4 + 2 l^4 m^2) \tHh 
+ l^7 m^5(-2  - 2 l^2  + l^2 m^2) \Hhh \Big )
\end{split}
\end{equation*}

For the non-rigid versions we use the contact-distance coloring function. The topologies of the proteins are different:

\begin{equation*}
\big[K_\text{CN29}\big]_{\basis} = \frac{l^2m^2}{(1 + l^2)^2} \, \Theta_2^2 \Theta_3,
\end{equation*}
\begin{equation*}
\big[K_\text{ADWX-1}\big]_{\basis} = \frac{l^2m^2}{(1 + l^2)^2} \, \Theta_3^3.
\end{equation*}


\end{example}
\subsection*{Acknowledgements} 
The author would like to thank Dimos Goundaroulis and J\'{o}zef Przytycki for fruitful conversations on the topic.
The author was financially supported by the Slovenian Research Agency grants BI-US/19-21-111, N1-0083, N1-0064, and J1-8131.




\bibliographystyle{abbrv}
\bibliography{biblio}

\begin{thebibliography}{10}

\bibitem{Alexander2017}
K.~Alexander, A.~J. Taylor, and M.~R. Dennis.
\newblock Proteins analysed as virtual knots.
\newblock {\em Scientific Reports}, 7(1), 2017.

\bibitem{Chlouveraki2019}
M.~Chlouveraki, D.~Goundaroulis, A.~Kontogeorgis, and S.~Lambropoulou.
\newblock A generalized skein relation for khovanov homology and a
  categorification of the $\theta$-invariant.
\newblock {\em arXiv:1904.07794 [math.GT]}, 2019.

\bibitem{Dabrowski-Tumanski2019}
P.~Dabrowski-Tumanski, D.~Goundaroulis, A.~Stasiak, and J.~I. Sulkowska.
\newblock $\theta$-curves in proteins.
\newblock {\em arXiv:1908.05919 [cond-mat.soft]}, 2019.

\bibitem{Diamantis2016a}
I.~Diamantis and S.~Lambropoulou.
\newblock A new basis for the homflypt skein module of the solid torus.
\newblock {\em Journal of Pure and Applied Algebra}, 220(2):577--605, 2016.

\bibitem{Diamantis2019}
I.~Diamantis and S.~Lambropoulou.
\newblock An important step for the computation of the {HOMFLYPT} skein module
  of the lens spaces $l(p,1)$ via braids.
\newblock {\em Journal of Knot Theory and Its Ramifications}, page 1940007,
  2019.

\bibitem{Diamantis2016}
I.~Diamantis, S.~Lambropoulou, and J.~H. Przytycki.
\newblock Topological steps toward the {H}omflypt skein module of the lens
  spaces $l(p,1)$ via braids.
\newblock {\em Journal of Knot Theory and Its Ramifications}, 25(14):1650084,
  2016.

\bibitem{Freyd1985}
P.~Freyd, D.~Yetter, J.~Hoste, W.~B.~R. Lickorish, K.~Millett, and A.~Ocneanu.
\newblock A new polynomial invariant of knots and links.
\newblock {\em Bulletin of the American Mathematical Society}, 12(2):239--247,
  1985.

\bibitem{Gabrovsek2014}
B.~Gabrov\v{s}ek and M.~Mroczkowski.
\newblock The {HOMFLYPT} skein module of the lens spaces $l_{p,1}$.
\newblock {\em Topology and its Applications}, 175:72--80, 2014.

\bibitem{Gueguemcue2017a}
N.~Gügümcü and L.~H. Kauffman.
\newblock New invariants of knotoids.
\newblock {\em European Journal of Combinatorics}, 65:186--229, 2017.

\bibitem{Gueguemcue2017}
N.~Gügümcü and S.~Lambropoulou.
\newblock Knotoids, braidoids and applications.
\newblock {\em Symmetry}, 9(12):315, 2017.

\bibitem{Goundaroulis2017a}
D.~Goundaroulis, J.~Dorier, F.~Benedetti, and A.~Stasiak.
\newblock Studies of global and local entanglements of individual protein
  chains using the concept of knotoids.
\newblock {\em Scientific Reports}, 7(1), 2017.

\bibitem{Goundaroulis2017}
D.~Goundaroulis, N.~Gügümcü, S.~Lambropoulou, J.~Dorier, A.~Stasiak, and
  L.~Kauffman.
\newblock Topological models for open-knotted protein chains using the concepts
  of knotoids and bonded knotoids.
\newblock {\em Polymers}, 9(12):444, 2017.

\bibitem{Jamroz2014}
M.~Jamroz, W.~Niemyska, E.~J. Rawdon, A.~Stasiak, K.~C. Millett,
  P.~Su{\l}kowski, and J.~I. Sulkowska.
\newblock {KnotProt}: a database of proteins with knots and slipknots.
\newblock {\em Nucleic Acids Research}, 43(D1):D306--D314, 2014.

\bibitem{Kauffman1993}
L.~Kauffman, J.~Simon, K.~Wolcott, and P.~Zhao.
\newblock Invariants of theta-curves and other graphs in 3-space.
\newblock {\em Topology and its Applications}, 49(3):193--216, 1993.

\bibitem{Kauffman1989}
L.~H. Kauffman.
\newblock Invariants of graphs in three-space.
\newblock {\em Transactions of the American Mathematical Society},
  311(2):697--697, 1989.

\bibitem{Liang1994}
C.~Liang and K.~Mislow.
\newblock Knots in proteins.
\newblock {\em Journal of the American Chemical Society}, 116(24):11189--11190,
  1994.

\bibitem{Mansfield1994}
M.~L. Mansfield.
\newblock Are there knots in proteins?
\newblock {\em Nature structural biology}, 1:213--214, 1994.

\bibitem{Mashaghi2014}
A.~Mashaghi, R.~J. van Wijk, and S.~J. Tans.
\newblock Circuit topology of proteins and nucleic acids.
\newblock {\em Structure}, 22(9):1227--1237, sep 2014.

\bibitem{Millett2013}
K.~C. Millett, E.~J. Rawdon, A.~Stasiak, and J.~I. Su{\l}kowska.
\newblock Identifying knots in proteins.
\newblock {\em Biochemical Society Transactions}, 41(2):533--537, 2013.

\bibitem{Mishra2011}
R.~Mishra and S.~Bhushan.
\newblock Knot theory in understanding proteins.
\newblock {\em Journal of Mathematical Biology}, 65(6-7):1187--1213, 2011.

\bibitem{Mroczkowski2004}
M.~Mroczkowski.
\newblock Polynomial invariants of links in the projective space.
\newblock {\em Fundamenta Mathematicae}, 184:223--267, 2004.

\bibitem{Mroczkowski2018}
M.~Mroczkowski.
\newblock The dubrovnik and kauffman skein modules of the lens spaces lp,1.
\newblock {\em Journal of Knot Theory and Its Ramifications}, 27(03):1840004,
  2018.

\bibitem{ODonnol2018}
D.~O'Donnol, A.~Stasiak, and D.~Buck.
\newblock Two convergent pathways of {DNA} knotting in replicating {DNA}
  molecules as revealed by {$\Theta$}-curve analysis.
\newblock {\em Nucleic Acids Research}, 46(17):9181--9188, 2018.

\bibitem{Paris2013}
L.~Paris and E.~Wagner.
\newblock {HOMFLY}-{PT} skein module of singular links in the three-sphere.
\newblock {\em Journal of Knot Theory and Its Ramifications}, 22(02):1350005,
  2013.

\bibitem{Przytycki1991}
J.~H. Przytycki.
\newblock Skein modules of 3-manifolds.
\newblock {\em Bulletin of the Polish Academy of Sciences}, 39(1-2):91--100,
  1991.

\bibitem{Przytycki1992}
J.~H. Przytycki.
\newblock {\em Skein module of links in a handlebody, {T}opology '90:
  {P}roceedings of the {R}esearch Semester in Low Dimensional Topology at
  {OSU}}.
\newblock De Gruyter, 1992.
\newblock Editors: Boris N. Apanasov and Walter D. Neumann and Alan W. Reid and
  Laurent Siebenmann.

\bibitem{Przytycki2006}
J.~H. Przytycki.
\newblock Skein modules.
\newblock {\em arXiv:math/0602264 [math.GT]}, 2006.

\bibitem{Sulkowska2012}
J.~I. Sulkowska, E.~J. Rawdon, K.~C. Millett, J.~N. Onuchic, and A.~Stasiak.
\newblock Conservation of complex knotting and slipknotting patterns in
  proteins.
\newblock {\em Proceedings of the National Academy of Sciences},
  109(26):E1715--E1723, 2012.

\bibitem{Sulkowska2008}
J.~I. Sulkowska, P.~Sulkowski, P.~Szymczak, and M.~Cieplak.
\newblock Stabilizing effect of knots on proteins.
\newblock {\em Proceedings of the National Academy of Sciences},
  105(50):19714--19719, 2008.

\bibitem{Tian2017}
W.~Tian, X.~Lei, L.~H. Kauffman, and J.~Liang.
\newblock A knot polynomial invariant for analysis of topology of {RNA} stems
  and protein disulfide bonds.
\newblock {\em Computational and Mathematical Biophysics}, 5(1), 2017.

\bibitem{Tropp2012}
B.~Tropp.
\newblock {\em Molecular biology: genes to proteins}.
\newblock Jones \& Bartlett Learning, Sudbury, Mass, 2012.

\bibitem{Turaev2012}
V.~Turaev.
\newblock Knotoids.
\newblock {\em Osaka Journal of Mathematics}, 49(1):195--223, 2012.

\bibitem{Turaev1990}
V.~G. Turaev.
\newblock Conway and kauffman modules of a solid torus.
\newblock {\em Journal of Soviet Mathematics}, 52(1):2799--2805, 1990.

\bibitem{Virnau2006}
P.~Virnau, L.~A. Mirny, and M.~Kardar.
\newblock Intricate knots in proteins: Function and evolution.
\newblock {\em {PLoS} Computational Biology}, 2(9):e122, 2006.

\end{thebibliography}

\end{document}